\newcommand{\order}{{\mathcal O}}
\newcommand{\ulambda}{q_0}
\DeclareMathOperator{\sign}{sign}
\DeclareMathOperator{\sech}{sech}
\newcommand{\uhat}{\hat{u}}
\newcommand{\qhat}{\hat{q}}
\newcommand{\Ahat}{\hat{A}}
\newcommand{\Bhat}{\hat{B}}
\newcommand{\FF}{\mathcal{F}}
\newcommand{\real}{\mathbb{R}}
\newtheorem{thm}{Theorem}
\newtheorem{lemma}{Lemma}
\title{Approximation of arbitrarily high-order PDEs by first-order hyperbolic relaxation}
\author{David I. Ketcheson\thanks{Computer, Electrical, and Mathematical Sciences \& Engineering Division, 
King Abdullah University of Science and Technology, Thuwal 23955, Saudi Arabia, (\url{david.ketcheson@kaust.edu.sa})} 
\and Abhijit Biswas\thanks{Computer, Electrical, and Mathematical Sciences \& Engineering Division, King Abdullah 
University of Science and Technology, Thuwal 23955, Saudi Arabia, (\url{abhijit.biswas@kaust.edu.sa})}}
\begin{document}
\maketitle

\abstract{
    We present a framework for constructing a first-order hyperbolic system
    whose solution approximates that of a desired higher-order evolution equation.
    Constructions of this kind have received increasing interest in recent
    years, and are potentially useful as analytical or computational
    tools for understanding the corresponding higher-order equation.
    We perform a systematic analysis of a family of linear model equations
    and show that for each member of this family there is a stable hyperbolic
    approximation whose solution converges to that of the model equation
    in a certain limit.
    We then show through several examples that this approach can be applied 
    successfully to a very wide range of nonlinear PDEs of practical interest.
}

\section{Introduction}
Partial differential equations (PDEs) can be broadly grouped into important classes
(e.g. hyperbolic, parabolic, elliptic) that share certain mathematical properties.
Equations in different classes require different methods of analysis and numerical
solution.  In some cases it is known to be possible to approximate the solutions
of a PDE in one class by solutions of a PDE in a different class.  For instance,
Cattaneo and Vernotte \cite{cattaneo1958forme,vernotte1958paradoxes},
constructed a hyperbolic approximation (or \emph{relaxation}) of the heat equation.
While relatively little work was done on such approximations in subsequent decades,
they have lately received a great deal of attention 
\cite{antuono2009dispersive,grosso2010dispersive,toro2014advection,mazaheri2016first,
favrie2017rapid,ruter2018hyperbolic,li2018new,dhaouadi2019extended,escalante2019efficient,
chesnokov2019hyperbolic,bassi2020hyperbolic,gavrilyuk2022hyperbolic,besse2022perfectly,
dhaouadi2022hyperbolic,dhaouadi2022first,chesnokov2023strongly,gavrilyuk2024conduit}.
In each of the works just cited, the authors develop a hyperbolic approximation for
a specific higher-order PDE, often with the goal of removing numerical stiffness.
The techniques used in developing these equations appear to be specific to each individual equation.
The spatial order of all the PDEs studied so far in this way is at most three.

In this work we present a general technique for constructing a system of first-order hyperbolic PDEs
(referred to as a hyperbolic relaxation, or \emph{hyperbolization})
whose solution \emph{approximates} that of a given system of higher-order
evolution PDEs.  We begin by presenting some motivating examples in Sections \ref{sec:example-heat}-\ref{sec:example-kdv}.
We present these known examples in a way that suggests a more general approach
that unifies some of the existing work in this area and provides a hyperbolization
for equations of arbitrary order.
A key question that arises is the well-posedness of the Cauchy problem for
the resulting hyperbolic system (i.e., the stability of the dispersion relation).
In Section~\ref{sec:general} we study a family of linear PDEs of arbitrary order, for
which we derive the necessary and sufficient conditions for
stability of the hyperbolization.  This leads to a unique hyperbolization for any high-order 
evolution PDE, which is provably stable for linear PDEs in the family we study.  
In Section~\ref{sec:convergence} we prove a convergence result relating the solution of 
the stable hyperbolic relaxation constructed in Section~\ref{sec:general} and the original
high-order PDE.
We conjecture that the general hyperbolization we have constructed is also stable
for other high-order linear and nonlinear PDEs (whenever the original equation is stable).
In Section~\ref{sec:examples} we provide some examples supporting and extending this conjecture
by applying our approach to various nonlinear PDEs, including complex-valued 
equations, systems of equations, equations with mixed derivatives, and equations with up
to (at least) 4th-order derivatives.  We conclude in Section~\ref{sec:conclusion} with a
discussion of important open questions.

We emphasize that the main goal of the present work is not to provide further
motivation for hyperbolization, but to dramatically increase its range of applicability and improve
our understanding of it by tying together the numerous existing ad hoc examples through a unified
general approach.

\subsection{Prior work and motivation}
Let us review in more detail the existing literature on hyperbolization and
the proposed motivation for its use.
Toro \& Montecinos \cite{toro2014advection} gave a general approach to and analysis of
hyperbolization of second-order advection-reaction-diffusion equations, including a detailed
study of the efficiency of the numerical solution of the hyperbolized model compared to
the original model.  
Mazaheri et. al. \cite{mazaheri2016first} provided a general approach to hyperbolization of
scalar PDEs that include first-, second-, and third-order spatial derivatives (i.e.,
advection-diffusion-dispersion equations), and demonstrated it through application to the
KdV and Burgers equations.  Many other hyperbolized versions of PDE models have
been proposed, e.g. for Korteweg-de Vries (KdV) \cite{besse2022perfectly},
Benjamin-Bona-Mahoney (BBM) \cite{gavrilyuk2022hyperbolic},
Serre-Green-Naghdi (SGN) \cite{favrie2017rapid,escalante2019efficient,bassi2020hyperbolic},
multilayer dispersive shallow water equations \cite{chesnokov2019hyperbolic},
the compressible Navier-Stokes equations \cite{li2018new},
the Euler-Korteweg equation (in hydrodynamic form) \cite{dhaouadi2019extended},
and a range of additional dispersive wave equations 
\cite{antuono2009dispersive,grosso2010dispersive,dhaouadi2022hyperbolic,dhaouadi2022first,chesnokov2023strongly,gavrilyuk2024conduit},
as well as elliptic equations \cite{ruter2018hyperbolic}.



Most of the foregoing works explore hyperbolization as a means to remove
numerical stiffness.
For an evolution PDE with spatial derivatives of order $m$, stability of any explicit numerical method
requires a time step $\Delta t = \order(\Delta x^m)$, and typically for $m>1$ it is best
to use implicit numerical solvers.  For PDEs with mixed space and time derivatives,
a numerical solution is usually obtained by performing an elliptic solve at each step.  In all of
these cases, the complexity and per-step cost of numerical solvers is substantially increased compared
to that of first-order hyperbolic systems, which can typically be solved efficiently using 
only explicit methods.  Replacing the stiff high-order PDE by a first order PDE is therefore
quite appealing.
However, the hyperbolic approximations referenced above all have the property that they
approximate the higher-order system accurately only when certain characteristic speeds
become large.  The stiffness of the original problem can thus return through the mechanism
of these large characteristic speeds,
so it is not clear \emph{a priori} that solution of a hyperbolic approximation
is more efficient.  Many existing works on hyperbolization do not consider this question.
The most thorough theoretical comparison of efficiency in this regard shows that (for certain second-order
equations) a hyperbolized approximation can be solved more efficiently than the original problem 
when the spatial grid is relatively coarse,
but it becomes inefficient on finer grids \cite{toro2014advection}.
We consider the relative computational efficiency of hyperbolization plus discretization versus
direct high-order PDE discretization to be an important matter for future work.

Another potential advantage of hyperbolic formulations is more straightforward: imposition of boundary conditions --
especially non-reflecting boundary conditions,
which are more developed for hyperbolic problems than for general PDEs.  For instance, in
\cite{besse2022perfectly}, some dispersive wave equations are approximated by hyperbolic systems
in order to formulate absorbing boundary conditions using perfectly matched layers.
As we will see, hyperbolized PDEs do require additional initial conditions, but these
can usually be obtained in a very natural way and are similar to those required for
PDEs with multiple temporal derivatives.

Finally, a fundamental motivation for approximation by hyperbolic equations is that of causality.
PDEs with higher-order terms allow for action at a distance or arbitrarily fast propagation
of high-wavenumber perturbations, and are thus incompatible with the theory of relativity.
In contrast, for hyperbolic PDEs perturbations travel at or
below some maximum speed.  This served as the motivation for some of the early work on
hyperbolic relaxation of the heat equation.

The approach introduced by Jin and Xin \cite{jin1995relaxation,liu2001basic} is also referred to
as \emph{hyperbolic relaxation}, but it differs fundamentally from the concept dicsussed
herein.
In Jin-Xin relaxation, the relaxation terms are usually
algebraic, the starting point is a first-order nonlinear hyperbolic system, and the result is
a dissipative approximation.  Here we start from higher-order (not hyperbolic)
PDEs, we use relaxation terms that involve differential operators, and the resulting
approximation is non-dissipative if the original problem is non-dissipative.
See Section \ref{sec:jinxin} for discussion of one connection between these ideas.

Before entering into further details, we present two introductory examples that have
appeared before and serve to motivate the technique developed later in the present work.

\subsection{The Heat Equation} \label{sec:example-heat}
The oldest example of hyperbolization of which we are aware was proposed independently by
Cattaneo \cite{cattaneo1958forme} and Vernotte \cite{vernotte1958paradoxes}.
The heat equation
\begin{align} \label{heat}
    \partial_t u & = \partial_{x}^2 u
\end{align}
can be written superficially as a first-order system by introducing the auxiliary variable $v(x,t) = u_x$:
\begin{subequations}
\begin{align}
    \partial_t u & = \partial_x v \\
    \partial_x u & = v. \label{uv-constraint}
\end{align}
\end{subequations}
The hyperbolic formulation is obtained by relaxing the constraint \eqref{uv-constraint}
as part of an evolution equation for $v$:
\begin{subequations} \label{heatH}
\begin{align}
    \partial_t u & = \partial_x v \\
    \tau \partial_t v & = (\partial_x u - v).
\end{align}
\end{subequations}
The first equation comes directly from \eqref{heat}, while the second
equation causes $v(x,t)$ to relax toward $\partial_x u$.  The parameter
$\tau>0$ controls the scale of this relaxation time; as $\tau \to 0$,
one expects that the solution of \eqref{heatH}
will tend to that of \eqref{heat}.  Defining $q=[u,v]^T$, system \eqref{heatH} can be 
written in the form
\begin{align} \label{linear-hyperbolic}
    \partial_t q + A \partial_x q = Bq
\end{align}
where $A$ is diagonalizable with eigenvalues $\pm \sqrt{\tau^{-1}}$, so this system is hyperbolic.
If we assume a solution of the form $q(x,t)=\exp(i(kx-\omega t))$, we find that
the full dispersion relation for \eqref{heatH} is
\begin{align}
    \tau \omega^2 + i \omega - k^2 & = 0,
\end{align}
resulting in
\begin{align}
    \omega_\pm(k) & = \frac{i}{2\tau}\left(-1 \pm \sqrt{1-4k^2\tau}\right) \\
              & = \frac{i}{2\tau}\left(-1 \pm (1-2k^2\tau)\right) + \order(k^4\tau).
\end{align}
We see that $\omega_+(k)\approx -i k^2$, so that the dispersion relation
of \eqref{heat} is approximately preserved.  The accuracy of the approximation is seen to improve as $\tau \to 0$.  
Notice that $\omega_-$ has negative real part proportional to $\tau^{-1}$,
so there is a spurious mode but it decays quickly if $\tau$ is small.
Note also that if $4k^2 \tau > 1$, then $\omega$
has a real part and a negative imaginary part.  This means that solutions are stable but
we should expect to see wave-like behavior for high wavenumbers or when $\tau$ is large.



Figure \ref{fig:heat} shows a comparison of the solution of the heat equation with that of
its hyperbolic approximation, with a Gaussian initial condition.  For large values of 
$\tau$, the solution behaves similarly to that of the wave equation, with the initial
pulse breaking into two waves propagating in opposite directions, consistent with the dispersion
relation analysis above.  For smaller values of $\tau$, the solution behaves very similarly to that of the heat equation.
We remark that wave-like heat transport under certain conditions has long been theorized and
recently been experimentally observed \cite{chester1963second,yan2024thermography}.

\begin{figure}
    \center
    \includegraphics[width=6in]{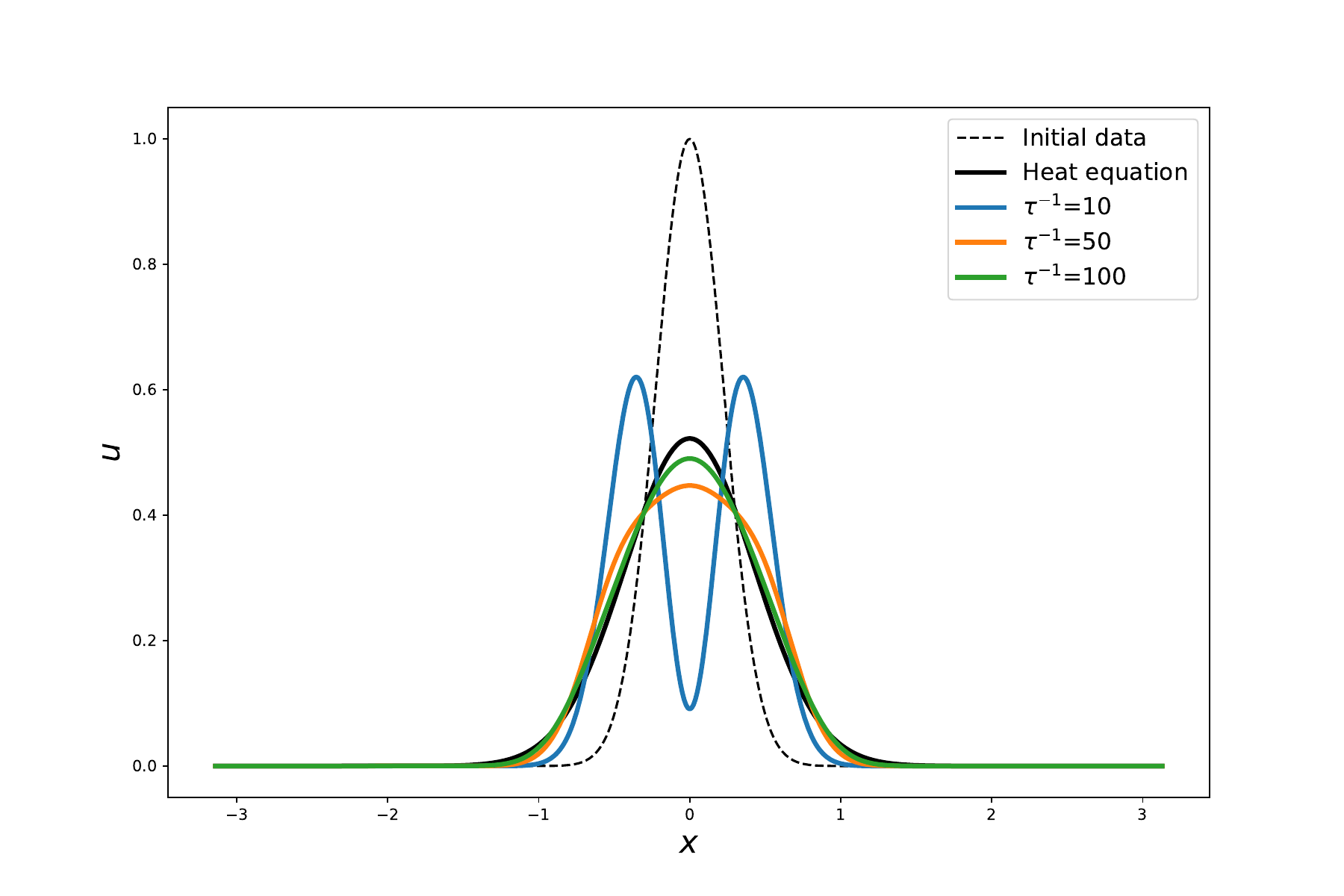}
    \caption{Comparison of the solution of the heat equation \eqref{heat} and its hyperbolic
    approximation \eqref{heatH}.  The approximation improves with smaller values of $\tau$.}
    \label{fig:heat}
\end{figure}

\subsubsection{Relation to Jin-Xin Relaxation} \label{sec:jinxin}
Jin \& Xin \cite{jin1995relaxation} introduced a technique for approximating a hyperbolic PDE
\begin{align*}
    \partial_t u + \partial_x f(u) & = 0
\end{align*}
by the hyperbolic system
\begin{subequations} \label{jinxin}
\begin{align}
    \partial_t u + \partial_x v & = 0 \\
    \partial_t v + a \partial_x u & = -\frac{1}{\tau} (v- f(u)). 
\end{align}
\end{subequations}
We can formally cast the hyperbolic relaxation of the heat equation in this
form by writing $f(u) = -\partial_x u$ and taking $a=0$. The result is equivalent
to the system \eqref{heatH} (though $v$ is defined with the opposite sign).
In \cite{jin1995relaxation} it was shown that the solution of \eqref{jinxin}
is approximated by that of
$$
    \partial_t u + \partial_x f(u) = \tau \partial_x\left( (a - (f'(u))^2) \partial_x u \right),
$$
and as a result the stability of \eqref{jinxin} requires the \emph{sub-characteristic condition}
$|f'(u)| \le \sqrt{a}$.
Taking the same approach for the heat equation and including the term $a \partial_x u$ in the
evolution equation for $v$, one finds that the solution of the resulting relaxation system is
approximated (to first order in $\tau$) by
\begin{align} \label{heat-jin-xin}
    \partial_t u & = \partial_x^2 u + \tau\left(a - \partial_x^2\right) \partial_x^2 u.
\end{align}
This equation is stable for any non-negative $a$ (and in fact for any $a\ge -\tau^{-1}$),
so in particular one can simply take $a=0$ as we have done above.

For hyperbolic relaxation of higher-order PDEs (such as that in the next section),
there is not a straightforward relationship to Jin-Xin relaxation.

\subsection{The Korteweg-de Vries Equation} \label{sec:example-kdv}
Next, consider the Korteweg-de Vries (KdV) equation:
\begin{align} \label{kdv}
    \partial_t u + u\partial_x u +\partial^3_x u & = 0.
\end{align}
We introduce $v \approx \partial_x u$ and $w\approx \partial_x v$, so that $\partial_x w \approx \partial^3_x u$.
We use $w$ to write a first-order approximation of \eqref{kdv}, and
introduce evolution equations for $v, w$ that incorporate relaxation terms that
tend to enforce the foregoing approximations
\begin{subequations}\label{kdvH}
\begin{align} 
\partial_t u + u\partial_x u + \partial_x w & = 0 \\
\tau \partial_t v & = (\partial_x v - w) \label{kdvHb} \\
\tau \partial_t w & = -(\partial_x u - v). \label{kdvHc}
\end{align}
\end{subequations}
As before, we require $\tau>0$ and we expect that the solution of \eqref{kdvH} approaches that of \eqref{kdv}
as $\tau \to 0$.  The system \eqref{kdvH} was introduced previously as a means to
implement non-reflecting numerical boundary conditions \cite[Eqn. (13)]{besse2022perfectly}.
A comparison of the solutions of the KdV equation \eqref{kdv} and its hyperbolic
approximation \eqref{kdvH} for two different values of $\tau$ is shown
in Figure \ref{fig:kdv}.

\begin{figure}
    \center
    \includegraphics[width=6in]{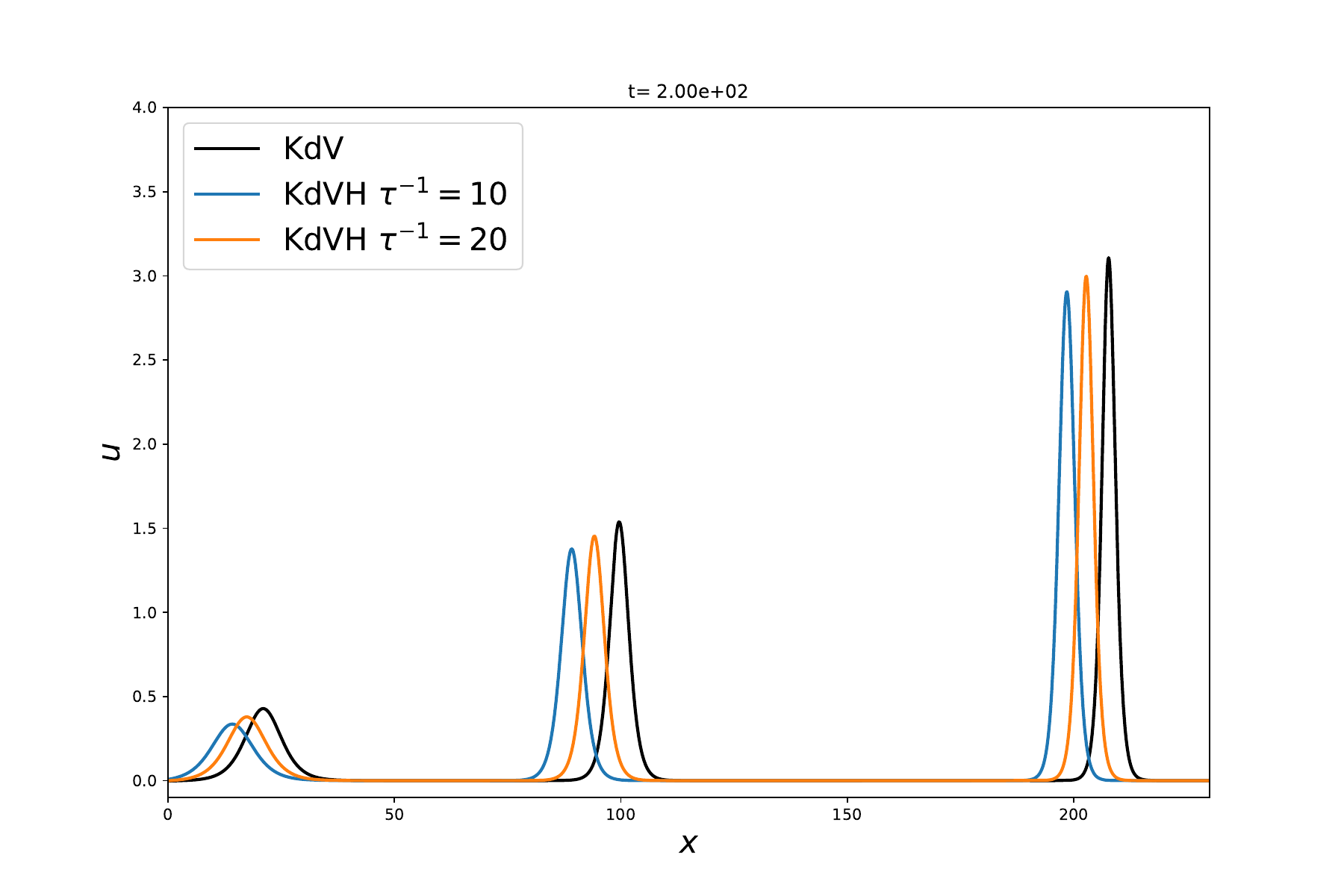}
    \caption{Comparison of the solution of the KdV equation \eqref{kdv} and its hyperbolic
    approximation \eqref{kdvH}.  The approximation improves with smaller values of $\tau$.}
    \label{fig:kdv}
\end{figure}

Although the formulation of \eqref{kdvH} seems fairly natural based on
our discussion and the similar handling of the heat equation above,
it is worth noting that it involves certain choices that at this point might
seem arbitrary.
Why are the right-hand sides of \eqref{kdvHb} and \eqref{kdvHc} chosen as they
are, rather than the reverse?  Why are the signs of those terms chosen as they are?
How would the properties of the system change if these signs or order were modified?
In the next section, we will answer these questions and devise a general approach
to formulating similar hyperbolizations.

The main contributions of the present work are:
\begin{itemize}
    \item construction of stable hyperbolic approximations to a family of arbitrarily high-order PDEs;
    \item proof of pointwise convergence of the hyperbolic relaxation in Fourier space;
    \item successful application of the technique to multiple problems outside the scope of what has been done before.
\end{itemize}


\section{A general approach to hyperbolization} \label{sec:general}

In this section we provide an approach to hyperbolization that generalizes
the examples above and other examples in the literature.
Let us consider the Cauchy problem for the linear model equation
\begin{subequations} \label{linear2}
\begin{align}
    \partial_t u + \sigma_0 \partial_x^m u & = 0 \label{linear2a} \\
    u(x,0) & = u_0(x) \\
    \textrm{where } \sigma_0 & = \begin{cases} \pm 1 & m \textrm{ odd} \\ (-1)^{m/2} & m \textrm{ even}. \end{cases} \label{sigma-cond}
\end{align}
\end{subequations}
The restriction on the sign $\sigma_0$ ensures that all solutions remain bounded
for $t>0$.
We introduce new variables $q_j \approx \partial_x^j u$ for $j = 0, 1, 2, \dots, m-1$.
Then we can approximate \eqref{linear2} by the system of equations
\begin{subequations} \label{gen-hyp}
\begin{align}
    \partial_t q_0 + \sigma_0 \partial_x q_{m-1} & = 0 \\
    \tau \partial_t q_j & = \sigma_j (q_{i_j} - \partial_x q_{i_j-1}) & j = 1, 2, \dots, m-1.
\end{align}
\end{subequations}
Here we take $\tau>0$, $\sigma_j = \pm 1$, and the sequence $i_1, i_2,
\dots, i_{m-1}$ is a permutation of the integers from 1 to $m-1$.
The idea is to introduce an evolution equation for each $q_j$, and incorporate in each equation
one of the constraints $q_k \approx  \partial_x q_{k-1} \approx \partial_x^k u$.  We now investigate
how one should choose
which constraint appears in which evolution equation (i.e., we determine the permutation sequence $i$) and the
signs $\sigma_j$ of the constraint multipliers.

For instance, consider the linearized KdV equation:
\begin{align} \label{linear-kdv}
    \partial_t u + \partial_x^3 u & = 0.
\end{align}
We have $m=3$, and the system \eqref{gen-hyp} takes one of the two following forms:
\begin{subequations} \label{kdva}
\begin{align}
    \partial_t q_0 + \partial_x q_2 & = 0 \\
    \tau \partial_t q_1 & = \pm (q_1 - \partial_x q_0) \\
    \tau \partial_t q_2 & = \pm (q_2 - \partial_x q_1),
\end{align}
\end{subequations}
or
\begin{subequations} \label{kdvb}
\begin{align}
    \partial_t q_0 + \partial_x q_2 & = 0 \\
    \tau \partial_t q_1 & = \pm (q_2 - \partial_x q_1) \\
    \tau \partial_t q_2 & = \pm (q_1 - \partial_x q_0).
\end{align}
\end{subequations}
Altogether the above equations represent 8 possible choices.
The question immediately arises: which of these systems (if any) is preferable?
At a glance, the most natural choice is \eqref{kdva} with "-" on the right
side of the second and third equation, since then these equations directly impose
that $q_1$ and $q_2$ relax to $\partial_x q_0$ and $\partial_x q_1$.  However,
that system is in fact \emph{unstable}.
As we will see, only one of these 8 systems is stable, and in fact for each
value of $m$, there is a unique choice of permutation and signs in \eqref{gen-hyp}
such that the resulting approximation of \eqref{linear2} is stable.

\subsection{Necessary Conditions for Stability} \label{sec:stability}
We can write \eqref{gen-hyp} in matrix form as
\begin{subequations} \label{matform}
\begin{align}
    D \partial_t q + A \partial_x q & = B q.
\end{align}
where
\begin{align} \label{ABD}
    A & = \begin{bmatrix}  0 & \sigma_0 \\ P & 0 \end{bmatrix} & B & = \begin{bmatrix} 0 & 0 \\ 0 & P \end{bmatrix}, 
    & D = \begin{bmatrix} 1 & & & \\ & \tau & & \\ & & \ddots & \\ & & & \tau \end{bmatrix},
\end{align}
\end{subequations}
and $P$ is a signed permutation matrix\footnote{A (real) signed permutation matrix is square with all entries equal to -1, 0 or +1,
and has exactly one non-zero entry in each row and each column.} of size $m-1$.
Applying the ansatz $q(x,t) = q_0 \exp(i(kx-\omega t))$, we find
that the dispersion relation is given by the solution of
\begin{align} \label{dispreldet}
    \det(-i \omega D + ik A - B) & = 0.
\end{align}
Since the time-dependence of the ansatz has the form $\exp(- i \omega t)$,
the solution is stable only if the imaginary part of $\omega$ is non-positive.
It is convenient to consider two necessary conditions for stability: stability for $k=0$, and
stability for large $|k|$.  It turns out that it is sufficient to consider the special case
$\tau = 1$, in which case we can write \eqref{dispreldet} as
\begin{align} \label{dispreldet-lam1}
    \det(\omega I - (k A + iB)) & = 0.
\end{align}

\subsubsection{Low-wavenumber stability}
A necessary condition for stability at low wavenumbers is stability for $k=0$.
In that case, the dispersion relation \eqref{dispreldet-lam1} reduces to
\begin{align}
    \det(\omega I - i B) & = 0,
\end{align}
so stability is obtained only if all eigenvalues of $B$ lie in the closed left half-plane.
The matrix $B$ has one eigenvalue equal to zero and the rest equal to the eigenvalues of $P$,
so we must choose $P$ such that its eigenvalues are in the closed left half-plane.
The following lemma characterizes such permutation matrices.

\begin{lemma} \label{lem:P}
    Let $P$ be a real signed permutation matrix with all eigenvalues in the closed left
    half-plane.  Then
    \begin{align} \label{small-k}
        P = R-D
    \end{align}
    where $R$ is skew-symmetric ($R^T = -R$), and $D$ is a diagonal matrix with all entries equal to 1 or 0.
\end{lemma}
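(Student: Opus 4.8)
The plan is to reduce the matrix identity $P = R - D$ to two pointwise conditions on the entries of $P$, and then to verify those conditions by analyzing the cycle structure of the underlying signed permutation. First I would note that the decomposition, if it exists, is forced: since $R$ is skew-symmetric and $D$ is diagonal (hence symmetric), $P = R-D$ gives $P^T = -R - D$, so that $R = \frac{1}{2}(P - P^T)$ and $D = -\frac{1}{2}(P + P^T)$. The first of these is automatically skew-symmetric, so the entire content of the lemma is that $D = -\frac{1}{2}(P+P^T)$ is diagonal with entries in $\{0,1\}$. Reading this off entrywise, it is equivalent to the two conditions (i) $P_{ii} \in \{0,-1\}$ for all $i$, and (ii) $P_{ij} = -P_{ji}$ whenever $i \neq j$. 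It therefore suffices to establish (i) and (ii) from the eigenvalue hypothesis.

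Next I would use that, after relabeling indices by the cycles of the underlying permutation $\pi$, the signed permutation matrix $P$ is block diagonal with one block per cycle. A cycle of length $\ell$ produces an $\ell \times \ell$ signed cyclic-shift block $C$ whose $\ell$-th power is $C^\ell = sI$, where $s \in \{-1,+1\}$ is the product of the signs encountered around the cycle; hence its characteristic polynomial is $\lambda^\ell - s$ and its eigenvalues are exactly the roots of $\lambda^\ell = s$, which are equally spaced on the unit circle. The hypothesis that every eigenvalue of $P$ lies in the closed left half-plane then applies cycle by cycle.

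The key step is a short geometric argument excluding long cycles. For any $\ell \ge 3$, the $\ell$ roots of $\lambda^\ell = s$ are equally spaced with largest gap $2\pi/\ell$, so any arc containing all of them has width at least $(\ell-1)\cdot 2\pi/\ell$, and this exceeds $\pi$ once $\ell \ge 3$; since the closed left half-plane meets the unit circle in an arc of width exactly $\pi$, some eigenvalue must then have positive real part, independent of the value of $s$. This contradicts the hypothesis, so only fixed points ($\ell=1$) and transpositions ($\ell=2$) occur. A fixed point has the single eigenvalue $\pm 1$, so nonpositivity of its real part forces the sign $-1$, i.e. $P_{ii} = -1$, giving (i); a $2$-cycle on indices $i,j$ has the block $\bigl(\begin{smallmatrix} 0 & P_{ij} \\ P_{ji} & 0 \end{smallmatrix}\bigr)$ with eigenvalues $\pm\sqrt{P_{ij}P_{ji}}$, which lie in the closed left half-plane only when $P_{ij}P_{ji} = -1$, i.e. $P_{ij} = -P_{ji}$, giving (ii) (the diagonal being zero on such blocks). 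Assembling the cases, the fixed points supply the unit entries of $D$ and the transpositions supply the nonzero entries of the skew-symmetric $R$, so $P = R - D$. I expect the only delicate points to be the clean verification of the block identity $C^\ell = sI$ and the arc-width inequality for $\ell \ge 3$; the reduction to (i)--(ii) and the low-order cases are routine bookkeeping.
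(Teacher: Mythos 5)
Your proof is correct and follows essentially the same route as the paper's: decompose the signed permutation into disjoint cycles, observe that a signed cycle of length $\ell$ has as eigenvalues the $\ell$-th roots of $\pm 1$, and conclude that only fixed points with sign $-1$ and antisymmetric $2$-cycles can have all eigenvalues in the closed left half-plane. Your write-up supplies details the paper glosses over (the forced formulas $R = \tfrac{1}{2}(P-P^T)$, $D = -\tfrac{1}{2}(P+P^T)$, and the explicit arc-width argument excluding $\ell \ge 3$), but the underlying argument is the same.
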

\begin{proof}
Any permutation can be decomposed in terms of disjoint cycles, and the set of eigenvalues of the
permutation is just the union of the eigenvalues of each cycle.  The eigenvalues of a cycle
of length $n$ are just the $n$th roots of unity.  The same can be said for \emph{signed} permutation
matrices, except that the eigenvalues of a signed cycle of length $n$ are in general $n$th roots 
of either $+1$ or $-1$.  The only sets of roots of $\pm1$ that lie fully in the closed left
half-plane are $-1$ itself and the square roots of $-1$.  Therefore, $P$ must be the product of
cycles of length 1 and/or 2 only, and the 2-cycle components must be antisymmetric.
\end{proof}

\subsubsection{High-wavenumber stability}
Dividing \eqref{dispreldet-lam1} by $k^m$ and taking the limit $k\to \infty$, we obtain the
high-wavenumber limiting dispersion relation
\begin{align}
    \det(c(k) I - A) & = 0,
\end{align}
where $c(k)=\omega(k)/k$ is the phase velocity.  This is just
the dispersion relation of the homogeneous hyperbolic system $q_t + Aq_x$, whose solutions are
stable if and only if $A$ has only real eigenvalues.  
\begin{lemma} \label{lem:Areal}
    Let $P$ be a real signed permutation matrix and let $A$ be given by \eqref{ABD}
    with $\sigma_0=\pm1$.  If $A$ has only real eigenvalues then
    \begin{align}\label{large-k}
        A = S + D
    \end{align}
    where $S$ is symmetric and $D$ is a
    diagonal matrix with all entries equal to $\pm 1$ or 0.
\end{lemma}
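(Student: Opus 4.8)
The plan is to reduce this to the same cycle-decomposition argument used in Lemma~\ref{lem:P}, after first recognizing that $A$ is itself a signed permutation matrix. First I would read off the sparsity pattern of $A$ directly from \eqref{ABD} and the definition of the system \eqref{gen-hyp}: the first row of $A$ has its single nonzero entry $\sigma_0$ in the last column (from the $\sigma_0 \partial_x q_{m-1}$ term), while rows $1,\dots,m-1$ are given by the block $\begin{bmatrix} P & 0 \end{bmatrix}$, so each has one nonzero inherited from the signed permutation $P$ and lying in columns $0,\dots,m-2$. Counting one nonzero per row and per column shows that $A$ has exactly one entry equal to $\pm 1$ in each row and each column; that is, $A$ is an $m\times m$ real signed permutation matrix.

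Next I would decompose $A$ into disjoint signed cycles, exactly as in the proof of Lemma~\ref{lem:P}. The eigenvalues of a signed cycle of length $n$ are the $n$th roots of $+1$ or of $-1$, according to whether the product of the signs around the cycle is $+1$ or $-1$. The hypothesis that $A$ has only real eigenvalues forces every such root to be real. The only real roots of $\pm1$ are $+1$ and $-1$, and these arise precisely from cycles of length one (eigenvalue $\pm1$) and from length-two cycles whose sign product is $+1$ (eigenvalues $+1$ and $-1$); a length-two cycle with sign product $-1$ has eigenvalues $\pm i$, and every cycle of length at least three contributes a genuinely complex root. Hence the real-eigenvalue hypothesis permits only fixed points and ``balanced'' transpositions.

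To finish, I would obtain the decomposition $A = S + D$ by splitting $A$ into its diagonal and off-diagonal parts. Each fixed point contributes a diagonal entry $\pm1$, so collecting these (with zeros elsewhere) gives the diagonal matrix $D$ with entries in $\{-1,0,+1\}$. Each admissible length-two cycle on indices $p,q$ contributes off-diagonal entries with $A_{pq}=A_{qp}=s$ for a common sign $s$ (this is exactly what ``sign product $+1$'' means for $\pm1$ entries), so the off-diagonal remainder $S = A - D$ is symmetric. This yields $A = S + D$ as claimed.

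The step I expect to require the most care is the bookkeeping in the first paragraph: confirming that $A$ really is a signed permutation despite the column ``shift'' relative to $P$ (the term $q_{i_j} - \partial_x q_{i_j-1}$ places the differentiated variable in column $i_j-1$ rather than $i_j$), so that the cycle structure of $A$ is in general different from that of $P$ and must be analyzed on its own. Once that is established, the eigenvalue analysis is a direct adaptation of Lemma~\ref{lem:P}, the only new wrinkle being that here real eigenvalues \emph{rule out} the skew (sign-product $-1$) transpositions rather than selecting them.
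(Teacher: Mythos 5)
Your proposal is correct and takes essentially the same approach as the paper: the paper's proof likewise observes that $A$ is itself a signed permutation matrix and invokes the cycle-decomposition argument of Lemma~\ref{lem:P}, concluding that real eigenvalues force $A$ to consist only of 1-cycles and 2-cycles with equal signs. Your write-up simply makes explicit the details the paper leaves implicit (the sparsity check that $A$ is a signed permutation, and the final splitting into the symmetric off-diagonal part $S$ and the diagonal part $D$).
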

\begin{proof}
    The proof is similar to that of Lemma \ref{lem:P}.  Notice that $A$ itself is a
    signed permutation matrix, so to have only real eigenvalues it must consist
    only of 2-cycles with equal signs and/or 1-cycles.
\end{proof}
It is possible to prove an even stronger result: $A$ must be anti-diagonal (with non-zero
entries $a_{ij}$ only for $i+j=m+1$).  But Lemma \ref{lem:Areal} is sufficient to prove
the main result in the next section.

\subsection{The unique stable hyperbolization}
Let us continue our example above with the linearized KdV equation \eqref{linear-kdv}.
Now $P$ is a $2 \times 2$ matrix and from Lemma \ref{lem:P} we see that either
\begin{align}
P & = -I & \text{ or } P & = \pm \begin{bmatrix} 0 & 1 \\ -1 & 0 \end{bmatrix}.
\end{align}
From the stability condition \eqref{large-k} for large $|k|$, we see that the only possibility is
\begin{align}
P & = \begin{bmatrix} 0 & -1 \\ 1 & 0 \end{bmatrix},
\end{align}
yielding the hyperbolized system (corresponding to \eqref{kdvb} with minus in the second equation
and plus in the third)
\begin{align*}
    \partial_t q_0 + \partial_x q_2 & = 0 \\
    \tau \partial_t q_1  - \partial_x q_1 & = - q_2 \\
    \tau \partial_t q_2 + \partial_x q_0 & = q_1.
\end{align*}

Returning to the general equation \eqref{linear2}, since $a_{1m}=\pm 1$, condition \eqref{large-k} implies
$p_{m-1,1}=\pm 1$.  Then condition \eqref{small-k} implies $p_{1,m-1}=\mp 1$; hence
\eqref{large-k} implies $p_{m-2,2}=\mp1$, and so forth.  Continuing in this
manner, it can be shown that the only way to satisfy the necessary stability conditions
is to choose $P$ (and $A$) to be anti-diagonal, with entries
\begin{align} \label{stableP-plus}
    p_{ij} & = \begin{cases} 0 & i+j \ne m \\
                            \sigma_0 (-1)^{j-1} & i+j = m \ \ j \le m/2 , \\
                            \sigma_0 (-1)^{m-j} & i+j = m \ \ j > m/2.
    \end{cases}
\end{align}

These necessary conditions, which we obtained by considering the stability for both low and 
high wave numbers, turn out to be sufficient for the stability of the system \eqref{matform} for all
wave numbers $k$ and all  $\tau > 0$ . The result is summarized in the following theorem.
\begin{thm} \label{thm:stability}
    The system \eqref{matform} is stable for all wavenumbers $k$ and all $\tau>0$ if and
    only if $P$ is given by \eqref{stableP-plus}.
\end{thm}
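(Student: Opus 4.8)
The plan is to establish the dispersion relation explicitly and show that all roots $\omega(k)$ have non-positive imaginary part for the specific $P$ given by \eqref{stableP-plus}, exploiting the anti-diagonal structure. Since the necessary-conditions discussion already forces $P$ to be anti-diagonal with the stated signs, the real content is \emph{sufficiency}: I would fix this $P$ and analyze \eqref{dispreldet-lam1}, i.e.\ the eigenvalues of the matrix $M(k) := kA + iB$, where by \eqref{large-k} we may write $A = S + D_A$ with $S$ symmetric and $D_A$ diagonal with $\pm1,0$ entries, and by \eqref{small-k} we write $P = R - D_P$ with $R$ skew-symmetric, so $B = \mathrm{diag}(0, P)$. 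The anti-diagonality of $A$ and the explicit signs let me compute the characteristic polynomial in closed form or at least reduce it to a tractable scalar equation.

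First I would split $M(k)$ into its Hermitian and anti-Hermitian parts. The key observation I would chase is that $kA$ is real symmetric (since $A$ is anti-diagonal with the signs chosen so that symmetric pairs have equal entries, by Lemma~\ref{lem:Areal}), hence Hermitian, while $iB = i\,\mathrm{diag}(0,P)$ has the property that its Hermitian part comes from the symmetric part of $P$ and its anti-Hermitian part from the skew part. Writing $M = H + iK$ with $H,K$ Hermitian, stability (all eigenvalues in the closed lower half-plane for $\omega$, equivalently $\mathrm{Im}\,\omega \le 0$) amounts to showing the field-of-values argument: for any unit vector $\psi$, $\mathrm{Im}(\psi^* M \psi) \le 0$. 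Because $B = \mathrm{diag}(0,P)$ and $P = R - D_P$ with $D_P$ having nonnegative diagonal, I would compute $\mathrm{Im}(\psi^* M \psi) = \mathrm{Im}(i\,\psi^* B \psi)$; the skew part $R$ contributes a real quantity to $\psi^* B\psi$ and the $-D_P$ part contributes a purely real nonpositive quantity, so that $\mathrm{Im}(i\,\psi^*B\psi) = \mathrm{Re}(\psi^*(-B)\psi)$ works out to $\psi^* D_P \psi \ge 0$ with the correct sign, giving $\mathrm{Im}\,\omega \le 0$. This numerical-range bound handles \emph{all} $k$ and, after rescaling, all $\tau$ simultaneously, which is the main payoff of reducing to $\tau=1$ as the text does.

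The hard part will be verifying that the specific signs in \eqref{stableP-plus} make $A$ genuinely symmetric (not merely a real signed permutation with real spectrum) and make $D_P$ \emph{nonnegative}, i.e.\ that the diagonal $1$-cycles of $P$ carry sign $-1$ rather than $+1$; Lemma~\ref{lem:P} only guarantees the eigenvalues $-1$ are possible, but I must confirm that the permutation pattern forced by alternating the two lemmas lands on exactly those signs for $P$ and $A$ consistently. Concretely, I would track the induction sketched in the text ($a_{1m}=\pm1 \Rightarrow p_{m-1,1}=\pm1 \Rightarrow p_{1,m-1}=\mp1 \Rightarrow \dots$) and check that it pins down every entry, confirming both the symmetry of $A$ and the nonnegativity of $D_P$; a subtlety is the middle entry when $m$ is odd (the fixed point of the anti-diagonal involution at $i=j=m/2$ rounded appropriately), where a $1$-cycle appears and its sign must be checked to be $-1$ for $B$'s contribution, or $A$'s central entry sign must be checked against $\sigma_0$.

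As a fallback in case the field-of-values bound is not tight enough to be non-strict where needed (e.g.\ purely imaginary eigenvalues on the boundary, which correspond to neutrally stable modes), I would compute the characteristic polynomial $\det(\omega I - M(k))$ directly using the anti-diagonal structure of $A$, expanding along the anti-diagonal to get a polynomial whose coefficients are explicit in $k$ and whose root locations I can control via a Routh--Hurwitz or Hermite--Biehler criterion on the transformed variable $\omega \mapsto i\omega$. This explicit route also serves to double-check the boundary cases and to confirm that no root crosses into $\mathrm{Im}\,\omega > 0$ as $k$ varies, completing the \emph{if} direction; the \emph{only if} direction is already delivered by the necessary conditions of Lemmas~\ref{lem:P} and~\ref{lem:Areal} together with the inductive uniqueness argument forcing \eqref{stableP-plus}.
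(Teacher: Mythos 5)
Your proposal is correct in substance, and for the sufficiency direction it takes a genuinely different route from the paper --- one that is actually more robust. The paper's proof notes that with $P$ as in \eqref{stableP-plus}, $A$ is symmetric and $B$ is skew-symmetric, so $C=kA+iB$ is Hermitian; then $\Lambda C = \Lambda^{1/2}(\Lambda^{1/2}C\Lambda^{1/2})\Lambda^{-1/2}$ with $\Lambda=D^{-1}$ shows $\Lambda C$ is similar to a Hermitian matrix, hence $\omega$ is real and the system is stable. Your field-of-values argument instead splits $M(k)=kA+iB$ into a Hermitian part plus $-i$ times a positive semidefinite diagonal, concluding only $\mathrm{Im}\,\omega\le 0$. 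That weaker conclusion is exactly what stability requires, and it covers a case the paper's argument glosses over: when $m$ is even, the anti-diagonal of $P$ meets the main diagonal at $i=j=m/2$, and \eqref{stableP-plus} together with \eqref{sigma-cond} places a $-1$ there, so $B$ is \emph{not} skew-symmetric and the eigenvalues are genuinely complex --- the paper's own heat-equation roots $\omega_\pm = \tfrac{i}{2\tau}\bigl(-1\pm\sqrt{1-4k^2\tau}\bigr)$ for \eqref{heatH} (the case $m=2$ of \eqref{matform}) are not real. So the paper's ``Hermitian, hence real'' step is literally valid only for odd $m$, while your decomposition $P=R-D_P$ with $D_P\ge 0$ handles both parities uniformly; you also correctly flagged that the diagonal $1$-cycle must carry sign $-1$, which is precisely what \eqref{sigma-cond} guarantees. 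The ``only if'' direction is identical in both treatments (Lemmas \ref{lem:P} and \ref{lem:Areal} plus the entry-by-entry induction).

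Two repairs are needed to make your sketch airtight. First, sign bookkeeping: for real skew-symmetric $R$, $\psi^*R\psi$ is purely \emph{imaginary}, not real, and since $\mathrm{Im}(iz)=\mathrm{Re}(z)$ the correct chain is $\mathrm{Im}(\psi^*M\psi)=\mathrm{Re}(\psi^*B\psi)=-\psi^*D_P\psi\le 0$; your displayed version, which arrives at $+\psi^*D_P\psi\ge 0$, points the inequality the wrong way even though your stated conclusion $\mathrm{Im}\,\omega\le0$ is the right one. (Relatedly, the fixed point of $P$'s anti-diagonal occurs for \emph{even} $m$; for odd $m$ it is $A$ that acquires a central diagonal entry, which is harmless since it sits symmetrically.) Second, the reduction to $\tau=1$ cannot be a space--time rescaling, because $D$ mixes the entries $1$ and $\tau$; the correct statement, which is where the paper's similarity trick is indispensable, is that $\omega$ ranges over the eigenvalues of $\Lambda(kA+iB)$, which is similar to $\Lambda^{1/2}(kA+iB)\Lambda^{1/2}$, and this congruence by a positive diagonal matrix preserves your three-part decomposition (real symmetric, plus $i\cdot$skew, plus $-i\cdot$nonnegative diagonal), so your numerical-range bound applies verbatim for every $\tau>0$. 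With these two adjustments your argument is complete, and the Routh--Hurwitz fallback becomes unnecessary.
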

\begin{proof}
The ``only if" part has been proven already by consideration of the necessary conditions
above.  It remains to prove sufficiency.

From the general dispersion relation \eqref{dispreldet} we see that $\omega$ is
given by the eigenvalues of $\Lambda(kA+iB)$, where $\Lambda = D^{-1}$.
Since $A$ is symmetric and $B$ is skew-symmetric, $C=kA+iB$ is Hermitian.  Furthermore,
since $\tau >0$ we can write
$$
    \Lambda C = \Lambda^{1/2}(\Lambda^{1/2} C \Lambda^{1/2}) \Lambda^{-1/2},
$$
from which we see that $\Lambda C$ is similar to a Hermitian matrix, hence it has only real
eigenvalues.  Since $\omega$ is real, the hyperbolized system is stable for all $k, \tau$.
\end{proof}
Note that in the limit $\tau = 0$ , the relaxation system \eqref{matform} simply reduces 
to the original PDE \eqref{linear2}.

The unique stable hyperbolic system, combining \eqref{matform} with \eqref{stableP-plus}, is
\begin{subequations} \label{uni-hyp}
\begin{align}
    \partial_t q_0 + \sigma_0 \partial_x q_{m-1} & = 0 \\
    \tau \partial_t q_j & = \sigma_0 (-1)^j (q_{m-j} - \partial_x q_{m-j-1}) & j = 1, 2, \dots, \left\lceil \frac{m}{2} \right\rceil \\
    \tau \partial_t q_j & = \sigma_0 (-1)^{m-j-1} (q_{m-j} - \partial_x q_{m-j-1}) & j = \left\lceil \frac{m}{2} \right\rceil+1, \dots, m-1.
\end{align}
\end{subequations}
Note that \eqref{uni-hyp} is stable for $\sigma_0$ satisfying \eqref{sigma-cond}.
It is not stable for even $m$ with $\sigma_0=-i^m$,
since the model equation \eqref{linear2a} is unstable in that case.

In order to approximate the solution of an initial value problem for equation \eqref{linear2},
one must also supply initial data for $q_0, q_1, \dots, q_{m-1}$.  The natural choice is
\begin{align} \label{initial-values}
    q_j(x,t=0) & = \partial_x^j u(x,t=0).
\end{align}

\subsection{General Linear Scalar Evolution PDEs}

Next we consider the more general model equation
\begin{align} \label{general-linear}
    u_t + \sum_{j=0}^{m-1} \alpha_j \partial_x^j u + \sigma_0 \partial^m_x u & = 0,
\end{align}
where we assume the values $\alpha_j$ are such that solutions remain bounded; in 
particular this implies that $\alpha_0\ge 0$ 
and if $m$ is even then $\sign(\alpha_m)=(-1)^{m/2}$.
There are many ways to modify \eqref{uni-hyp} to account for
the lower-order derivative terms present in \eqref{general-linear},
since each term $\partial_x^j u$ could be replaced by either $q_j$ or
$\partial_x q_{j-1}$.  Taking always the first choice yields
\begin{subequations} \label{gen-hyp-B}
\begin{align}
    \partial_t q_0  + \sigma_0 \partial_x q_{m-1} & = - \sum_{j=0}^{m-1} \alpha_j q_j \\
    \tau \partial_t q_j & = \sigma_j (q_{i_j} - \partial_x q_{i_j-1}) & j = 1, 2, \dots, m-1,
\end{align}
\end{subequations}
while taking always the second choice yields
\begin{subequations} \label{gen-hyp-A}
\begin{align}
    \partial_t q_0  + \sum_{j=1}^{m-1} \alpha_j \partial_x^j q_{j-1} + \sigma_0 \partial_x^m q_{m-1} & = - \alpha_0 q_0 \\
    \tau \partial_t q_j & = \sigma_j (q_{i_j} - \partial_x q_{i_j-1}) & j = 1, 2, \dots, m-1.
\end{align}
\end{subequations}

In the following analysis, and in the examples in Section \ref{sec:examples},
we follow \eqref{gen-hyp-A}.  It is an open question whether there is some advantage to
using \eqref{gen-hyp-B}, or some combination of the two.

\begin{thm} \label{thm:gl-stability}
For the hyperbolization \eqref{gen-hyp-A}, condition \eqref{small-k} is necessary and sufficient for
stability when $k=0$.  Choosing $P$ as in \eqref{stableP-plus} is sufficient for stability as $|k|\to\infty$.
\end{thm}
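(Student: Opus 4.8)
The plan is to treat the two wavenumber regimes separately, exactly as in the necessary-conditions analysis for the homogeneous system, and to track how the lower-order coefficients $\alpha_j$ perturb the matrices. Writing \eqref{gen-hyp-A} in the matrix form $D\partial_t q + \tilde A\partial_x q = \tilde B q$, the only changes from \eqref{matform} are that the source matrix $\tilde B = B - \alpha_0 e_0 e_0^T$ carries an extra $-\alpha_0$ in its top-left entry, while the advection matrix $\tilde A = A + e_0\alpha^T$ acquires a full first row $\alpha^T = (\alpha_1,\dots,\alpha_{m-1},0)$ coming from the replacement $\partial_x^j u \to \partial_x q_{j-1}$; here $e_0$ denotes the first standard basis vector. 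The key observation I would record at the outset is that this is a \emph{rank-one} modification of the symmetric matrix $A$, confined to row $0$, and that the source matrix is altered only on its diagonal.

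For the $k=0$ claim, setting $k=0$ in \eqref{dispreldet} leaves $\det(-i\omega D - \tilde B)=0$, so the admissible $\omega$ are $i$ times the eigenvalues of $D^{-1}\tilde B$. Since $\tilde B = \mathrm{diag}(-\alpha_0,P)$ is block diagonal and $D=\mathrm{diag}(1,\tau,\dots,\tau)$, those eigenvalues are $-\alpha_0$ together with $\tau^{-1}\lambda$ for $\lambda\in\mathrm{spec}(P)$. Hence stability ($\mathrm{Im}\,\omega\le0$) holds iff $-\alpha_0\le0$ and every eigenvalue of $P$ lies in the closed left half-plane. The first inequality is guaranteed by the standing assumption $\alpha_0\ge0$ on \eqref{general-linear} and is independent of the hyperbolization, so the condition on $P$ is the whole story: by Lemma \ref{lem:P} for necessity, and for sufficiency the direct check that the decomposition \eqref{small-k} forces each $1$-cycle of $P$ to contribute the eigenvalue $-1$ and each $2$-cycle the eigenvalues $\pm i$, this is precisely \eqref{small-k}. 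Note that the advection matrix, hence the entire perturbation $e_0\alpha^T$, drops out at $k=0$, so this half is essentially identical to the homogeneous case.

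The substantive half is the high-wavenumber limit. Reducing to $\tau=1$ as before (carrying $\tau$ through merely replaces $1$ by $1/\tau$ in what follows), dividing \eqref{dispreldet-lam1} by $k^m$ and letting $k\to\infty$ collapses the dispersion relation to $\det(cI-\tilde A)=0$, so I must show that $\tilde A = A + e_0\alpha^T$ still has only real eigenvalues, where $A$ is the symmetric anti-diagonal matrix determined by \eqref{stableP-plus}. I would apply the matrix determinant lemma,
\begin{align*}
    \det(cI - \tilde A) = \det(cI - A)\,\bigl(1 - \alpha^T (cI - A)^{-1} e_0\bigr),
\end{align*}
and then exploit anti-diagonality: since $A$ maps each $\mathrm{span}(e_j,e_{m-1-j})$ into itself, so does $(cI-A)^{-1}$, whence $(cI-A)^{-1}e_0\in\mathrm{span}(e_0,e_{m-1})$. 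Because the last component of $\alpha$ vanishes, only the $e_0$-component survives and the scalar correction collapses to $\alpha^T(cI-A)^{-1}e_0 = \alpha_1 c/(c^2-1)$. I therefore expect the factorization
\begin{align*}
    \det(cI - \tilde A) = (c^2-1)^{\lfloor m/2\rfloor - 1}\,r(c)\,(c^2 - \alpha_1 c - 1),
\end{align*}
where $r(c)$ is the trivial center factor $c\mp1$ when $m$ is odd and is absent when $m$ is even; every factor has real roots, the quadratic because its discriminant $\alpha_1^2+4$ is positive. This gives the claimed high-wavenumber stability.

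The main obstacle is this second half, and specifically the evaluation of $(cI-A)^{-1}e_0$: everything hinges on recognizing that the anti-diagonal block structure decouples the first coordinate into the $2\times2$ block pairing indices $0$ and $m-1$, so that the characteristic equation reduces to the single quadratic $c^2-\alpha_1 c-1=0$. A pleasant byproduct I would highlight is that only $\alpha_1$ influences the limiting phase velocities, with $\alpha_2,\dots,\alpha_{m-1}$ not appearing at all. I would also note explicitly why the argument stops at the two limits: since $\tilde A$ is no longer symmetric, $k\tilde A + i\tilde B$ fails to be Hermitian, so the global similarity-to-Hermitian argument of Theorem \ref{thm:stability} is unavailable — which is exactly why full stability for all $k$ is left to conjecture rather than proved here.
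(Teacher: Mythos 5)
Your proof is correct, and its two-regime skeleton is exactly the paper's: write \eqref{gen-hyp-A} in the form \eqref{matform} with modified matrices, deduce $k=0$ stability from the spectrum of the modified source matrix (which is that of $B$ with the zero eigenvalue replaced by $-\alpha_0$), and reduce stability as $|k|\to\infty$ to realness of the spectrum of the modified advection matrix. The $k=0$ half coincides with the paper's, except that you also verify the sufficiency direction --- that the decomposition \eqref{small-k} forces $\mathrm{spec}(P)\subset\{-1,\pm i\}$ --- which the paper leaves implicit (Lemma \ref{lem:P} as stated only gives the necessity direction). The genuine difference is in the high-wavenumber step: the paper merely asserts, ``using Laplace expansions,'' that the characteristic polynomial of $\Ahat$ is $(\omega-1)^a(\omega+1)^b(\omega^2-\alpha_1\omega-|\sigma_0|)$, whereas you actually derive it, by viewing $\Ahat$ as a rank-one, first-row-only perturbation of the symmetric anti-diagonal $A$, invoking the matrix determinant lemma, and using the invariant subspaces $\mathrm{span}(e_j,e_{m-1-j})$ to collapse the correction factor to $1-\alpha_1 c/(c^2-1)$. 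Your factorization $(c^2-1)^{\lfloor m/2\rfloor-1}\,r(c)\,(c^2-\alpha_1 c-1)$ is precisely the paper's polynomial, so your argument substantiates the step the paper glosses over, and it makes transparent the pleasant byproduct that only $\alpha_1$ (not $\alpha_2,\dots,\alpha_{m-1}$) influences the limiting characteristic speeds. Two minor points: the determinant-lemma identity is derived for $c\notin\mathrm{spec}(A)$ and should be promoted to all $c$ as a polynomial identity; and your parenthetical that general $\tau$ ``merely replaces $1$ by $1/\tau$'' is loose --- for $\tau\neq 1$ the limiting relation is $\det(cD-\Ahat)=0$, whose interior blocks give factors $\tau^2c^2-1$ and whose corner block gives $\tau c^2-\alpha_1\tau c-1$ --- but all roots remain real, so the conclusion stands. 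Your closing observation that $k\Ahat+i\Bhat$ is no longer Hermitian, so the similarity argument of Theorem \ref{thm:stability} cannot be extended, correctly identifies why both you and the paper stop at the two limiting regimes.
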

\begin{proof}
    Consider the hyperbolization of \eqref{general-linear} given by \eqref{modify-A}.
    This system can be written in the form \eqref{matform} but with $A$ and $B$ replaced by
    \begin{align}
    \Ahat & = \begin{bmatrix}  \alpha & \sigma_0 \\ P & 0 \end{bmatrix} & \Bhat & = \begin{bmatrix} -\alpha_0 & 0 \\ 0 & P \end{bmatrix}.
    \end{align}
    For $k=0$, stability requires that the eigenvalues of $\Bhat$ lie in the left half-plane.
    The eigenvalues of $\Bhat$ are the same as those of $B$, but with the zero eigenvalue
    replaced by $-\alpha_0$.  Hence it is necessary that the eigenvalues of $B$ lie in the
    closed left half-plane and that $\alpha_0\ge 0$; the latter condition is required
    for stability of the original PDE \eqref{general-linear}.

    For large $k$, stability requires that the eigenvalues of $\Ahat$ be real.  
    Using Laplace expansions, it can be shown that if $P$ is given by \eqref{stableP-plus}
    then the characteristic polynomial of $A$ takes the form
    $$
        (\omega-1)^a (\omega+1)^b (\omega^2 - \alpha_1 \omega - |\sigma_0|) = 0.
    $$
    The roots of this polynomial are all real.
\end{proof}

Taking $P$ as in \eqref{stableP-plus} with the hyperbolization \eqref{gen-hyp-A}
yields the system
\begin{subequations} \label{modify-A}
\begin{align}
    \partial_t q_0  + \sum_{j=1}^m \alpha_j \partial_x^j q_{j-1} & = - \alpha_0 q_0 \\
    \tau \partial_t q_j & = \sigma_0 (-1)^j (q_{m-j} - \partial_x q_{m-j-1}) & j = 1, 2, \dots, \left\lceil \frac{m}{2} \right\rceil \\
    \tau \partial_t q_j & = \sigma_0 (-1)^{m-j-1} (q_{m-j} - \partial_x q_{m-j-1}) & j = \left\lceil \frac{m}{2} \right\rceil+1, \dots, m-1.
\end{align}
\end{subequations}
While we have not found a way to generalize Theorem \ref{thm:stability} and
prove stability of \eqref{modify-A} for all wavenumbers, Theorem \ref{thm:gl-stability} suggests
that it is a promising choice.  Computational experiments indicate that this
choice may in fact be stable for all wavenumbers whenever the original problem
\eqref{general-linear} is stable.

The theory developed in this section is based on analysis of linear PDEs only.
In Section \ref{sec:examples} we will apply the approach above to more general
PDEs, including nonlinear equations and equations with mixed space-time derivatives.
For nonlinear problems, one expects that stability of the linearized problem is at
least a necessary condition; this leads again to the choice of signed permutation matrix
determined above.  For problems with mixed space- and time-derivatives, we
will again use linear stability as our guiding principle.  A detailed investigation
of the hyperbolization of such PDEs, and of systems of PDEs, is left to future work.

\section{Accuracy of the hyperbolic approximation}\label{sec:convergence}
As illustrated and discussed above, the solution of the hyperbolized PDE is expected
to converge to that of the original PDE as $\tau\to 0$.  What is the size of
the \emph{hyperbolization error} $\ulambda - u$, and how quickly does it vanish as
$\tau$ decreases?

For the hyperbolized heat equation \eqref{heatH}, using equality of mixed partial derivatives
we find that
\begin{align}
    \partial_t \ulambda & = \partial_x^2 \ulambda - \tau \partial_t^2 \ulambda.
\end{align}
Using equality of mixed partial derivatives again, one obtains
$\partial_t^2 \ulambda = \partial_x^4 \ulambda + \order(\tau)$,
so that (see \eqref{heat-jin-xin})
\begin{align}
    \partial_t \ulambda & = \partial_x^2 \ulambda - \tau \partial_x^4 {\ulambda} + \order(\tau^2).
\end{align}
This suggests that the hyperbolized solution converges linearly to that of the original
equation, and also that one should take
$$
\tau^{-1} > \frac{\max_{x,t} \left| \partial_x^4 u \right|}{\max_{x,t} \left| \partial_x^2 u \right|}
$$
in order to ensure that the error terms are small compared to the leading order terms.
For the initial data shown in Figure \ref{fig:heat}, this suggests one should take approximately $\tau^{-1} > 60$,
which is in general agreement with the results shown.  As time advances, the ratio of derivatives
becomes smaller, and one correspondingly observes improved agreement between the solutions
(even those with large values of $\tau$) after long enough time.

In the next theorem we reference the following space of functions:
\begin{align}
    \FF_n & = \left\{v(x) = \sum_{j=1}^n \hat{v}_j e^{i k_j x} : \hat{v}, k_j \in \real\right\} & n < \infty.
\end{align}
For the family of linear PDEs \eqref{linear2}, we have the following result.

\begin{thm} \label{thm:convergence}
Consider the one-dimensional Cauchy problem for \eqref{linear2} with
initial data $u(x,t=0) \in \FF_n$ for some $n$.
Let $q$ denote the solution of the hyperbolization \eqref{uni-hyp}-\eqref{initial-values}, and
let $T$ be given such that $0\le T < \infty$.  Then
\begin{align}
    \| q_0(x,T) - u(x,T) \|_{\infty} = \order(\tau).
\end{align}
\end{thm}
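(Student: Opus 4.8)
The plan is to reduce to a single Fourier mode, lift the exact solution onto the slow manifold, and estimate the resulting defect by Duhamel's principle, reusing the symmetrization from the proof of Theorem~\ref{thm:stability}. Since both \eqref{linear2} and \eqref{uni-hyp} are linear with constant coefficients and $u_0 \in \FF_n$ is a finite sum $\sum_j \hat v_j e^{i k_j x}$, I would solve each mode separately and recombine by the triangle inequality, using $\|\sum_j \hat c_j e^{i k_j x}\|_\infty \le \sum_j |\hat c_j|$. It therefore suffices to show, for a single wavenumber $k$, that the $q_0$-coefficient of the hyperbolization differs from $\hat u(T) = \hat u_0 e^{-i \omega_0(k) T}$ by $\order(\tau)$ uniformly on $[0,T]$, where $\omega_0(k)$ is the dispersion relation of \eqref{linear2}; since there are finitely many modes and $T<\infty$ is fixed, constants depending on $k_j$ and $T$ are harmless.

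For the mode $e^{ikx}$ the hyperbolized coefficients $\hat q(t) \in \mathbb{C}^m$ solve $\hat q' = M \hat q$ with $M = D^{-1}(B - ikA) = -i\Lambda C$, where $\Lambda = D^{-1}$ and $C = kA + iB$. I would introduce the lifted vector $\hat U(t) = (1, ik, \dots, (ik)^{m-1})^T \hat u(t)$, that is, the initial-data map \eqref{initial-values} carried along the true solution. A direct substitution shows that $\hat U$ satisfies the slow ($q_0$) equation exactly and each relaxation equation up to a defect $\mathbf d := D \hat U' - (B - ikA)\hat U$ of size $\order(\tau)$ whose \emph{first component vanishes}, with $\mathbf d_j(t) = -i \omega_0 \tau (ik)^j \hat u(t)$ for $j \ge 1$. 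Because the initial data match, the error $\hat e := \hat q - \hat U$ satisfies $\hat e(0) = 0$ and $\hat e' = M \hat e + \Lambda \mathbf d$, so Duhamel gives $\hat e(t) = \int_0^t e^{M(t-s)} \Lambda \mathbf d(s)\,ds$, and we must bound its first component.

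Here I would use the fact established in the proof of Theorem~\ref{thm:stability} that $\Lambda C = \Lambda^{1/2} H \Lambda^{-1/2}$ with $H = \Lambda^{1/2} C \Lambda^{1/2}$ Hermitian (for odd $m$), so that $e^{Mt} = \Lambda^{1/2} U\, \mathrm{diag}(e^{-i\omega_\ell t})\, U^* \Lambda^{-1/2}$ with $U$ unitary and real frequencies $\omega_\ell$. Since the first diagonal entry of $\Lambda^{1/2}$ is $1$, the first component of $\hat e$ equals $\int_0^t e_0^T U\, \mathrm{diag}(e^{-i\omega_\ell(t-s)})\, U^* \Lambda^{1/2}\mathbf d(s)\,ds$, and $\|\Lambda^{1/2}\mathbf d(s)\| = \order(\tau^{1/2})$. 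The naive bound yields only $\order(\tau^{1/2})$; to reach $\order(\tau)$ I would exploit two cancellations. First, the physical column $\eta_+$ of $U$ satisfies $\eta_+ = e_0 + \order(\tau^{1/2})$ while $\Lambda^{1/2}\mathbf d$ has zero first component, so $\eta_+^*\Lambda^{1/2}\mathbf d = \order(\tau)$ rather than $\order(\tau^{1/2})$. Second, the spurious columns carry almost no weight in the first slot: from $\sum_\ell |U_{0\ell}|^2 = 1$ and $|U_{0+}|^2 = |(\eta_+)_0|^2 = 1 - \order(\tau)$ we get $\sum_{\ell \ne +} |U_{0\ell}|^2 = \order(\tau)$, so a Cauchy--Schwarz pairing against $\Lambda^{1/2}\mathbf d$ again costs $\order(\tau)$. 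Using $|e^{-i\omega_\ell(t-s)}| \le 1$ (stability), both contributions are $\order(\tau)$ uniformly for $t \in [0,T]$, and summing the finitely many modes proves the claim.

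I expect the main obstacle to be precisely this gap between the easy $\order(\tau^{1/2})$ estimate and the asserted $\order(\tau)$ rate: it cannot be closed by norm bounds and forces one to track the alignment of the initial data with the physical (slow-manifold) eigenvector together with the weight concentration described above. A secondary difficulty is that the Hermitian symmetrization is available only when $B$ is skew-symmetric, i.e.\ for odd $m$; for even $m$ (such as the heat equation) the dispersion branches are genuinely complex, and I would instead invoke the stability from Theorem~\ref{thm:stability} to obtain a $\tau$-uniformly bounded, diagonalizable solution operator $e^{Mt}$ and rerun the same Duhamel argument, the only extra care being a $\tau$-uniform bound on the eigenvector matrix of $M$. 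I anticipate this even-order bookkeeping, rather than the conceptual structure, to be the most technical part of the write-up.
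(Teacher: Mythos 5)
Your core argument is correct for odd $m$, but it takes a genuinely different route from the paper's. The paper also reduces to a single Fourier mode, but then works directly with the propagator: it diagonalizes $L_\tau$ (Lemma~\ref{lem:L}), uses eigenvalue/eigenvector perturbation theory to show $L_\tau$ has a physical eigenvalue $-\tau(ik)^m+\order(\tau^2)$ with eigenvector $r_0+\order(\tau)$, $r_0=(1,ik,\dots,(ik)^{m-1})^T$, and then exploits the fact that the initial data \eqref{initial-values} is \emph{exactly} aligned with $r_0$, so that $R_\tau^{-1}\qhat(0)=\uhat(0)e_1+\order(\tau)$ and spurious modes are excited only at $\order(\tau)$; no integral estimates appear. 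You instead lift the exact solution onto the constraint manifold, observe (correctly --- I verified the computation) that the resulting defect has vanishing first component and $\order(\tau)$ remaining components, and close with Duhamel plus the symmetrizer $\Lambda^{1/2}$ from Theorem~\ref{thm:stability}. Your two cancellations are sound and do close the $\order(\tau^{1/2})$-to-$\order(\tau)$ gap: $\eta_+^*\Lambda^{1/2}\mathbf{d}=\order(\tau)$ because $\Lambda^{1/2}\mathbf{d}$ has zero first entry, and $\sum_{\ell\ne+}|U_{1\ell}|^2=\order(\tau)$ follows from $|(\eta_+)_1|^2=1-\order(\tau)$. Note, however, that both of these rest on eigenvector asymptotics you assert but do not prove; they follow from exactly the perturbation fact the paper proves (Lemma~\ref{lem:L}, property 4, eigenvector $r_0+\order(\tau)$) transported through $\Lambda^{-1/2}$ and normalized, so your proof implicitly needs the same spectral lemma infrastructure. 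What your route buys: the defect formulation makes transparent \emph{why} the rate is $\order(\tau)$ (the forcing never enters the physical equation), and Duhamel extends more naturally to perturbed or variable-coefficient settings. What the paper's buys: brevity, since the initial-data alignment does all the work algebraically.

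The one genuine weak point is the even-$m$ case, which the theorem covers (the heat equation is $m=2$). There your symmetrization fails exactly as you say: the stable $P$ of \eqref{stableP-plus} then has a $-1$ diagonal entry, $B$ is not skew-symmetric, and $U$ is not unitary --- which kills \emph{both} of your cancellations, since one uses Parseval and the other uses row normalization of $U$. So your fallback of ``rerunning the same Duhamel argument'' with a $\tau$-uniformly conditioned eigenvector matrix cannot literally be the same argument; a uniform condition-number bound alone only yields $\order(1)$. The fix is a left/right-eigenvector version of your cancellations: since the first row of $L_0$ vanishes, every spurious right eigenvector of $L_0$ has zero first component (so $e_1^T R_\tau$ has $\order(\tau)$ entries in the spurious slots), while the first row of $R_\tau^{-1}$ is $e_1^T+\order(\tau)$ and pairs with $\Lambda\mathbf{d}$ (zero first entry) to give $\order(\tau)$; together with eigenvalues in the closed left half-plane this gives $\order(\tau)$ for both parities of $m$, and in fact subsumes your unitary argument. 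You may take some comfort in the fact that the paper is also loose here: Lemma~\ref{lem:L}, property 3 asserts purely imaginary eigenvalues, which is false for even $m$ (the hyperbolized heat equation has eigenvalues with strictly negative real part); the statement both proofs actually need is that all eigenvalues lie in the closed left half-plane, so that the propagator entries remain bounded.
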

Theorem \ref{thm:convergence} shows that the Fourier transform of the hyperbolized 
solution converges to that of the original problem pointwise.  To prove
convergence of the solution itself (or for a broader class of functions) would
require more detailed estimates of the difference between the two as a function
of the wavenumber $k$.  Such an extension, as well
as extension to more general PDEs, is left to future work.

\subsection{Proof of Theorem \ref{thm:convergence}}
Due to linearity, it is sufficient to prove this result for the initial data
\begin{equation}
    u(x,0) = \hat{u}(0) e^{ikx} \;,
\end{equation}
The initial data \eqref{initial-values} can be written
    \begin{align}
        q_j(x,0) & = (ik)^j \uhat(0) e^{ikx}, \ \  m = 0, 1, \dots, m-1.
    \end{align}
Let $A, B, D$ be defined as in \eqref{matform} with $P$ defined by \eqref{stableP-plus}.
Let $L_\tau = -i \tau \Lambda (kA + iB)$; i.e.
    \begin{equation} \label{Ldef}
        L_\tau =
        \begin{bmatrix}
            0 & -ik \tau \\
            -ik P & 0 \\
        \end{bmatrix} 
        +
          \begin{bmatrix}
            0 & 0 \\
            0 & P \\
        \end{bmatrix} = 
        \begin{bmatrix}
        0 & 0 & \dots  & \dots & 0 & 0 & -ik\tau \\
        0 & 0 & \dots  & \dots & 0 & ik & -1 \\
        0 & 0 & \dots & \dots &  -ik & 1 & 0\\
        0 & 0 & \dots  & ik & -1 & 0 & 0 \\
        \vdots & \vdots & \rotatebox[origin=c]{70}{$\ddots$} & \rotatebox[origin=c]{70}{$\ddots$} & \vdots & \vdots & \vdots \\
        0 & ik & -1 & 0 & \dots  &  \dots & 0  \\
        -ik & 1 & 0 & 0 & \dots &  \dots & 0 \\
        \end{bmatrix} \;.
    \end{equation}

\begin{lemma} \label{lem:L}
    The matrix $L_\tau$ has the following properties:
    \begin{enumerate}
        \item $L_0$ has an eigenvalue zero of multiplicity one, with corresponding
                eigenvector $(r_0)_j = (ik)^{j-1}$, where $1\le j\le m$.
        \item $L_0$ is diagonalizable with $m$ distinct eigenvalues, and the remaining eigenvectors are orthogonal to $r_0$.
        \item $L_\tau$ is diagonalizable, and all of its eigenvalues have zero real part, for all $\tau>0$.
        \item $L_\tau$ has an eigenvector $r_0 + \order(\tau)$, with eigenvalue $\omega_0(\tau) = -\tau(ik)^m + \order(\tau^2)$.
        \item Let $R_\tau$ denote the matrix of right eigenvectors of $L_\tau$.  Then
                $$
                    R_\tau = R_0 + \order(\tau).
                $$
    \end{enumerate}
\end{lemma}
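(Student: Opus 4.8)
The plan is to treat $\tau$ as a perturbation parameter. Reading off \eqref{Ldef}, the matrix $L_\tau$ depends on $\tau$ only through its $(1,m)$ entry, so
\[
L_\tau = L_0 + \tau E, \qquad E = -ik\,e_1 e_m^{\top},
\]
a rank-one perturbation supported in the first row (I use the normalization $\sigma_0=1$ of \eqref{Ldef}). I would first pin down the spectral picture of $L_0$ by hand, and then obtain parts 3--5 from two standard tools: the Hermitian similarity already exploited in the proof of Theorem \ref{thm:stability}, and analytic (Kato) perturbation theory for simple eigenvalues. Everything is organized by two vectors: the right null vector $r_0$ with $(r_0)_j=(ik)^{j-1}$, and the left null vector $\ell_0=e_1^{\top}$ (the first row of $L_0$ is zero).

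For part 1 I would verify $L_0 r_0=0$ directly: in row $i\ge 2$ the only nonzero entries are the pair $(-ik,\,+1)$ in columns $m-i+1,\ m-i+2$, so that row evaluates to $-ik\,(ik)^{m-i}+(ik)^{m-i+1}=0$ — the Fourier form of the intended constraint $\qhat_j=ik\,\qhat_{j-1}$. The same staircase pattern shows rows $2,\dots,m$ are independent (column $1$ meets only the last row, and one eliminates upward), so $\operatorname{rank}L_0=m-1$ and $\ker L_0=\operatorname{span}\{r_0\}$. Algebraic simplicity of the eigenvalue $0$ then follows from $\ell_0 r_0=(r_0)_1=1\ne 0$: the left null vector does not annihilate $r_0$, hence $r_0\notin\operatorname{range}L_0$ and no Jordan block forms. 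Finally, for any eigenvalue $\mu\ne 0$ the first row of $(L_0-\mu I)v=0$ reads $-\mu v_1=0$, forcing $v_1=0$; thus every remaining eigenvector lies in $\{v_1=0\}=\ker\ell_0$, which is the sense in which it is orthogonal to $r_0$ (the orthogonality clause of part 2).

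For the spectral location in part 3 I would reuse the computation in the proof of Theorem \ref{thm:stability}: since $C=kA+iB$ is Hermitian and $\tau>0$, $L_\tau=-i\tau\Lambda C$ is similar to $-i\tau\,\Lambda^{1/2}C\Lambda^{1/2}$, hence diagonalizable with purely imaginary spectrum. The diagonalizability and distinctness of $L_0$ demanded by part 2 is the delicate point. Laplace expansion along the zero first row factors the characteristic polynomial as $\det(\mu I-L_0)=\mu\,p_{m-1}(\mu)$; one checks the constant term $p_{m-1}(0)=\pm1\ne 0$, so $0$ is not a repeated root, and it remains to show $p_{m-1}$ has $m-1$ simple roots for every real $k\ne 0$. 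The invariant subspace $\{v_1=0\}$ is exactly where these live, and $L_0$ restricted to it is the anti-bidiagonal block appearing in the minor: after a one-sided reversal it becomes a tridiagonal (Jacobi-type) matrix with nonvanishing off-diagonal entries, whose eigenvalues are necessarily simple. Granting this, $L_0$ has $m$ distinct eigenvalues and is diagonalizable. \emph{This uniform simplicity is the main obstacle}; establishing the Jacobi reduction and the non-coincidence of the $m-1$ nonzero eigenvalues for all real $k$ is the one genuinely nontrivial step, whereas the rest is direct verification or citation.

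With simplicity in hand, parts 4 and 5 are routine. First-order perturbation of the simple eigenvalue $0$ gives $\omega_0(\tau)=\tau\,(\ell_0 E r_0)/(\ell_0 r_0)+\order(\tau^2)$; since $\ell_0 E r_0=-ik\,(r_0)_m=-(ik)^m$ and $\ell_0 r_0=1$, this is exactly $\omega_0(\tau)=-\tau(ik)^m+\order(\tau^2)$, with perturbed eigenvector $r_0+\order(\tau)$ (part 4). Because every eigenvalue of $L_0$ is simple, the eigenprojections depend analytically on $\tau$ near $0$, so the full eigenvector matrix satisfies $R_\tau=R_0+\order(\tau)$ (part 5); in particular each non-slow column, having vanishing first entry at $\tau=0$ by part 2, has first entry of size $\order(\tau)$ — precisely the estimate that feeds the convergence proof of Theorem \ref{thm:convergence}.
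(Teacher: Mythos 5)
Your parts 1, 3, 4, and 5 are correct and essentially coincide with the paper's own proof: the paper likewise verifies $L_0 r_0 = 0$ directly, invokes for property 3 the Hermitian-similarity argument from the proof of Theorem \ref{thm:stability}, obtains property 4 from the first-order eigenvalue perturbation formula with left vector $e_1$ and right vector $r_0$ (citing Stewart), with the identical computation $e_1^T(L_\tau - L_0)r_0 = -\tau(ik)^m$, and gets property 5 from diagonalizability with distinct eigenvalues (citing Lax). Your reading of the ``orthogonality'' clause --- that eigenvectors for nonzero eigenvalues have vanishing first component, i.e.\ are annihilated by the left null vector $e_1^T$ --- is also the version that is actually true and actually used later in the proof of Theorem \ref{thm:convergence}; literal orthogonality to $r_0$ fails already for $m=3$.

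The genuine gap is exactly where you flagged it: simplicity of the $m-1$ nonzero eigenvalues of $L_0$, on which parts 2 and 5 hinge. The mechanism you sketch to close it does not work: a \emph{one-sided} reversal (multiplying the anti-bidiagonal block by the flip permutation on one side only) is not a similarity transformation, so it says nothing about eigenvalues, while the two-sided (similarity) reversal leaves the block anti-bidiagonal rather than tridiagonal, so the Jacobi simple-eigenvalue theorem cannot be invoked this way. The paper closes this step with tools you already have in hand. It works with the deflated matrix $\hat{L}_0$ --- in your language, the restriction of $L_0$ to the invariant subspace $\{v_1 = 0\}$ --- and argues: (a) $\hat{L}_0$ is skew-Hermitian, hence diagonalizable with purely imaginary eigenvalues; (b) for any eigenvalue $\lambda$, the submatrix of $\hat{L}_0 - \lambda I$ obtained by deleting its first row and last column is nonsingular, which is precisely the staircase/elimination structure you exploited to show $\operatorname{rank} L_0 = m-1$; hence every eigenvalue of $\hat{L}_0$ has geometric multiplicity one. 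Diagonalizability together with geometric simplicity forces all $m-1$ eigenvalues to be distinct. Replacing your Jacobi sketch by this minor/rank argument (applied to $\hat{L}_0 - \lambda I$ rather than only to $L_0$) completes your proof along the paper's lines; the remainder of your write-up can stand as is.
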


\begin{proof}
Properties 1-2: 
Taking $\tau=0$, the first row of $L_\tau$ vanishes, so $L_0$ has a zero
eigenvalue.  The corresponding eigenvector can be found by direct computation.
Let $\hat{L}_0$ denote the matrix obtained by removing the first row and last
column of $L_0$.  The remaining eigenvalues of $L_0$ are those of $\hat{L}_0$,
which has determinant 1, and thus has no zero eigenvalues.  
Furthermore, $\hat{L}_0$ is skew-Hermitian, so it is diagonalizable with purely imaginary eigenvalues.  
To see that the eigenvalues of $\hat{L}_0$ are distinct, let $\lambda$ denote
any of its eigenvalues.  Then the submatrix obtained by removing the first row
and last column of $\hat{L}_0 - \lambda I$ is non-singular, so this eigenvalue has multiplicity one.
    
Property 3:  This follows from the fact that $L_\tau$ is similar to a skew-Hermitian matrix (see the proof of Theorem \ref{thm:stability}.

Property 4:  
    The first statement follows from continuous dependence of the eigenvector on $\tau$.
    The left eigenvector corresponding to the zero eigenvalue of
    $L_0$ is $e_1=[1,0,\dots,0]^T \in {\mathbb R}^m$.  Then \cite[Thm. IV.2.3]{stewart1990matrix}
    states that the corresponding eigenvalue of $L_\tau$ is
    $$
        \omega = \frac{e_1^T (L_\tau-L_0) r_0}{e_1^T r_0} + \order(\tau^2) = -\tau (ik)^m + \order(\tau^2).
    $$

Property 5:  
    This follows from the fact that $L_\tau$ is diagonalizable with
    distinct eigenvalues; see \cite[Thm. 8, p. 130]{lax2007linear}.  \end{proof}

Finally, we prove Theorem \ref{thm:convergence}.

\begin{proof}
    The solution of \eqref{linear2} can be written as
    $q_{j}(x,t) = \qhat_j(t)e^{ikx}$ where $\qhat$
    is given by the solution of the initial-value ODEs
    \begin{subequations} \label{relaxed-ivp}
    \begin{align*}
        \qhat'(t) & = \tau^{-1} L_\tau \qhat(t) \\
        \qhat_j(0) & = (ik)^j \uhat(0), \ \ \ j = 0, 1, 2, \dots, m-1.
    \end{align*}
    \end{subequations}
    Here $L_\tau$ is given by \eqref{Ldef}.  By Lemma \ref{lem:L} property 3, we can write $L_\tau = R_\tau \Omega_\tau R^{-1}_\tau$, ordering the eigenvalues
    to that the top-left entry of $\Omega_\tau$ is the value referred to in
    Lemma \ref{lem:L}, property 4.  Then the solution of \eqref{relaxed-ivp} is
    \begin{align*}
        \qhat(t) & = \exp(t\tau^{-1} L_\tau) \qhat(0) \\
                 & = \exp(t \tau^{-1} R_\tau \Omega_\tau R_\tau^{-1}) \qhat(0) \\
                 & = R_\tau \exp(t \tau^{-1} \Omega_\tau) R_\tau^{-1} \qhat(0).
    \end{align*}
    Note that the entries of $\exp(t\tau^{-1}\Omega_\tau)$ remain bounded
    even for $\tau\to 0$ and arbitrary $t$, due to Lemma \ref{lem:L} property 3.
    So using Lemma \ref{lem:L} property 4, we can write
    $$
        \exp(t\tau^{-1}\Omega_\tau) = \exp(-t(ik)^m + \order(\tau)) e_1 e_1^T + X
         = (\exp(-t(ik)^m)+ \order(\tau)) e_1 e_1^T + X 
    $$
    where $e_1 = [1, 0, \dots, 0]^T$ and $X$ denotes a diagonal matrix
    whose entries are bounded as $\tau\to0$ and whose top left entry
    vanishes, so $Xe_1 = 0$.
    
    Now since $R_\tau=R_0+\order(\tau)$, then $R_\tau^{-1} = R_0^{-1} + \order(\tau)$.
    Furthermore, by the definition of $\qhat(0)$ and using
    Lemma \ref{lem:L} properties 1-2, we have
    $R_0^{-1}\qhat(0) = \uhat(0) e_1$, 
    So we have
    \begin{align*}
        \qhat(t) & = R_\tau \left((\exp(-t(ik)^m)+ \order(\tau)) e_1 e_1^T + X \right) (e_1 \uhat(0) + \order(\tau)) \\
                 & = \exp(-t (ik)^m) R_\tau e_1 \uhat(0) + \order(\tau) \\
                 & = \exp(-t (ik)^m) (R_0 + \order(\tau)) e_1 \uhat(0) + \order(\tau) \\
                 & = \exp(-t (ik)^m) \qhat(0) + \order(\tau) \\
                 & = \uhat(t) + \order(\tau),
    \end{align*}
    where we used Lemma \ref{lem:L} properties 3 and 4.
\end{proof}

\section{Additional Nonlinear Examples} \label{sec:examples}
In this section we apply and extend the foregoing theory in relation to some
widely-studied nonlinear PDEs.  Whereas the examples from the introduction
follow our theory in a straightforward way, here we focus on examples that
have additional complications.

In the numerical solutions for these examples, we use a Fourier pseudospectral collocation method in
space and explicit 3rd-order Runge-Kutta integration in time.
The code to reproduce each example is available online\footnote{\url{http://github.com/ketch/hyperbolization-RR}}.

\subsection{The Nonlinear Schrodinger Equation}
Let us consider the nonlinear Schrodinger (NLS) equation
\begin{align} \label{nls}
    i \partial_t u + \partial_x^2 u + \kappa |u|^2 u & = 0,
\end{align}
with linear dispersion relation $\omega(k) = k^2 - |u_0|^2$.
We introduce $q_0 \approx u$ and $q_1 \approx \partial_x u$.  In order to obtain a 
stable hyperbolic system, in this case
we use an imaginary multiplier $i \tau$ ($\tau>0$), yielding the system
\begin{subequations} \label{nlsH}
\begin{align}
    i\partial_t q_0 + \partial_x q_1 & = - \kappa |q_0|^2 q_0 \\
    i \tau \partial_t q_1 & = \partial_x q_0 - q_1.
\end{align}
\end{subequations}
Writing this system in the form \eqref{linear-hyperbolic} shows that the characteristic
speeds are $\pm\sqrt{\tau^{-1}}$.  Linearizing about $q_0=0$ yields the dispersion relation
$$
\tau\omega^2 + \omega - k^2 = 0,
$$
so that
$$
    \omega(k) = -\frac{1}{2\tau} \pm \frac{1}{2\tau} \sqrt{1+4k^2\tau}.
$$
Expanding the square root about $k=0$ shows that one of the roots is equal to
$k^2 + \order(k^4\tau^2)$, which approximates the dispersion relation of the linear
Schrodinger equation.

Figure \ref{fig:NLS} shows a comparison between the solution of the NLS equation
\eqref{nls} and those of the hyperbolic NLS equation \eqref{nlsH}, with varying
values of $\tau$.  The initial condition is a soliton solution of \eqref{nls}:
$$
    u_0(x) = \sqrt{2 \alpha \exp(ix)} \sech(\sqrt{\alpha} x).
$$
The solution shown is computed at $t=2$.

\begin{figure}
    \center
    \includegraphics[width=6in]{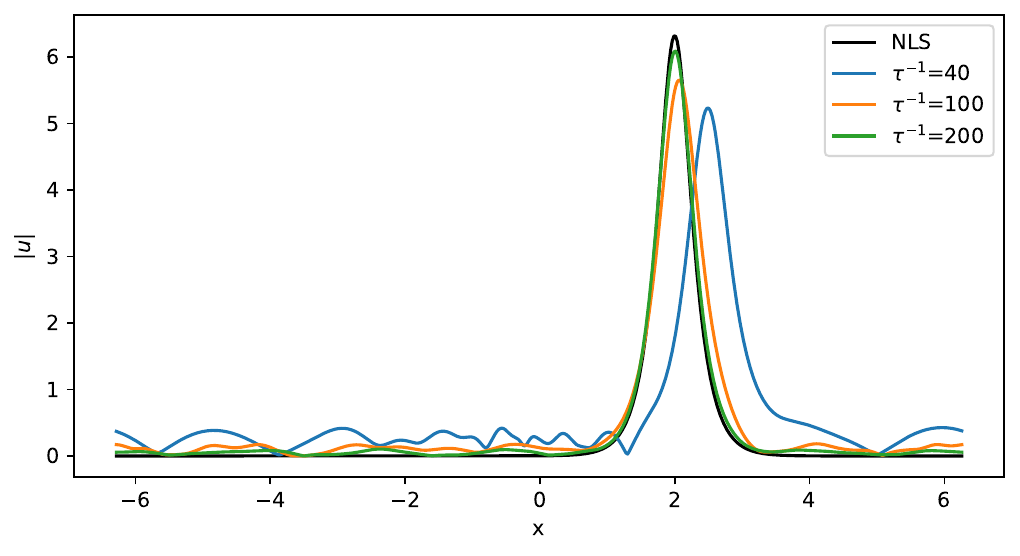}
    \caption{Comparison of the solution of the NLS equation \eqref{nls} and its hyperbolic
    approximation \eqref{nlsH}.  The approximation improves with smaller values of $\tau$.}
    \label{fig:NLS}
\end{figure}


\subsection{The Camassa-Holm Equation}
Next we consider the Camassa-Holm (CH) equation
\begin{align}
    \partial_t u - \partial_t \partial_x^2 u + 3u\partial_x u - 2\partial_x u \partial_x^2 u - u\partial_x^3 u = 0, \ \text{on} \ [x_{L},x_{R}]\times(0,T] \;,
\end{align}
with initial data $u(x,0) = \left(\frac{\pi}{2}\right) e^x - 2 \sinh(x) \arctan(e^x) - 1$ \cite{camassa1994new},  and the 
periodic boundary condition $u(x_L,t) = u(x_R,t)$. Figure 7 in \cite{camassa1994new} illustrates the solution to this problem. 
The initial parabolic pulse steepens and then breaks into a train 
of peakon solitons, which move at speeds proportional to their amplitudes. This 
example of hyperbolization is interesting in that it contains 
two different third-order derivative terms, one of which is nonlinear.
Introducing equations for $q_1$ and $q_2$ in the same way as for the KdV
equation, we write
\begin{align*}
    \partial_t q_0 - \partial_t q_2 + 3q_0\partial_x q_0 - 2 \partial_x q_0\ \partial_x q_1\  - q_0\partial_x q_2 & = 0 \\
    \tau \partial_t q_1 + (\partial_x q_1 - q_2) & = 0\\
    \tau \partial_t q_2 - (\partial_x q_0 - q_1) & = 0.
\end{align*}
This system is not in the usual hyperbolic form, since the first equation contains time derivatives of both $q_0$ and $q_2$,
and the term $2 \partial_x q_0 \partial_x q_1$, which is a product of two first-order partial derivatives. 
The time derivative $\partial_t q_2$ can be eliminated by adding $\tau^{-1}$ times the last equation to the first equation. 
Additionally, the term $2 \partial_x q_0 \partial_x q_1$ can be replaced by either $2 q_1 \partial_x q_1$ or $2 q_2 \partial_x q_0$. 
We chose the latter, which leads to the hyperbolized CH (CHH) system
\begin{subequations}
\begin{align}
    \partial_t q_0  + 3q_0\partial_x q_0 - 2 q_2 \partial_x q_0  - q_0\partial_x q_2 - \tau^{-1} (\partial_x q_0 - q_1) & = 0 \\
    \tau \partial_t q_1 + (\partial_x q_1 - q_2) & = 0\\
    \tau \partial_t q_2 - (\partial_x q_0 - q_1) & = 0.
\end{align}
\end{subequations}

\begin{figure}
    \center
    \includegraphics[width=6in]{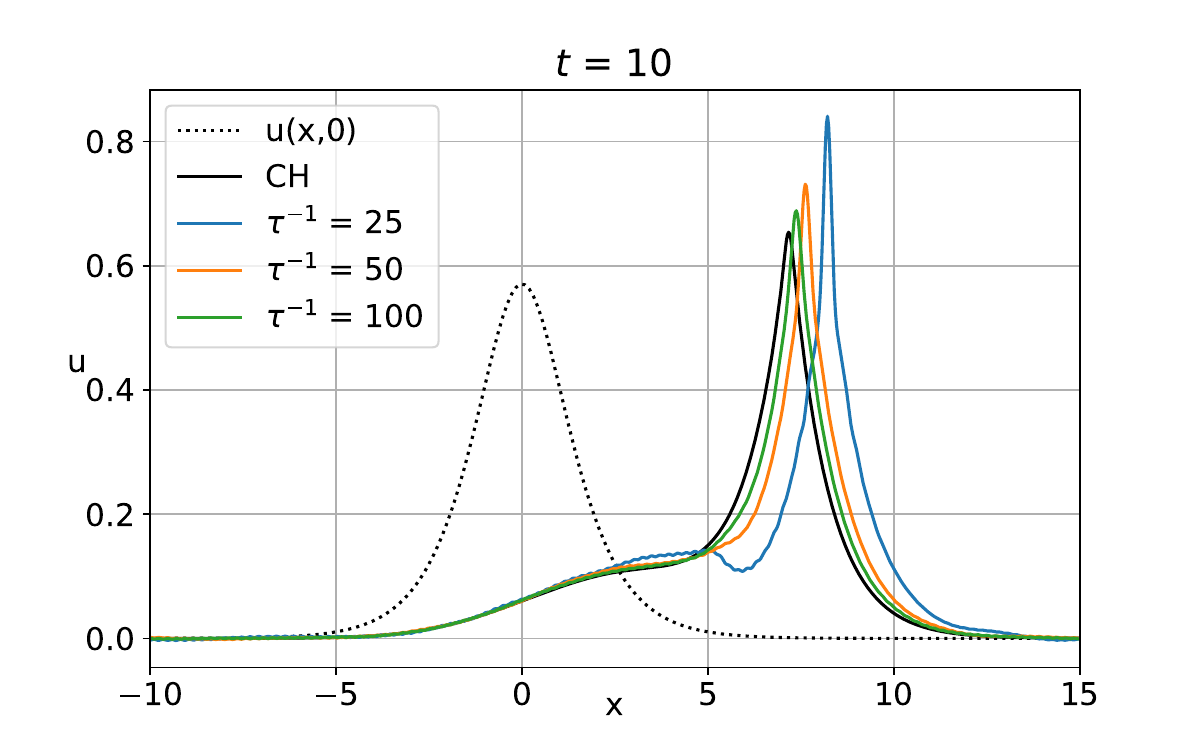}
    \caption{Comparison of solution of the Cammasa-Holm equation and its hyperbolic approximation.
    The approximation improves with smaller values of $\tau$.}
    \label{fig:CH}
\end{figure}

Interestingly, the linear dispersion relation for this system is stable
for both positive and negative values of $\tau$.
We choose $\tau^{-1} = 25, 50, 100$ and solve the corresponding hyperbolized systems on the domain $[x_L, x_R] = [-10, 50]$ 
using a pseudo-spectral semi-discretization in space with $m=512$ spatial grid points and the SSPRK33 method in time, with $t=10$. 
The solution of the original CH equation is also computed using the pseudo-spectral semi-discretization in space and the SSPRK33
method in time, employing the same number of grid points in space. Figure~\ref{fig:CH} illustrates that the solution of the CHH 
tends to the solution of the original CH equation as the hyperbolization parameter $\tau$ decreases.


\subsection{The Kuramoto-Sivashinsky Equation}
In this section we hyperbolize the Kuramoto-Sivashinsky (KS) equation
\begin{align} \label{KS}
    \partial_t u + \partial_x^2 u + \partial_x^4 u + u \partial_x u & = 0.
\end{align}
To our knowledge, this is the first example of hyperbolization of an equation with derivatives of order 
greater than three.  The linear dispersion
relation for \eqref{KS} is
$$
    \omega(k) = k u_0 + i (k^2 - k^4).
$$
Note that for small wavenumbers $k<1$, the linearized KS equation is unstable;
only the presence of the nonlinear term prevents blowup of solutions.  It is
natural to ask whether this feature prevents the generation of a stable
hyperbolization.

Based on the analysis in Section \ref{sec:stability}, we propose the hyperbolic
system
\begin{subequations} \label{KSh}
\begin{align}
    \partial_t q_0 + \partial_x q_1 + \partial_x q_3 + q_0 \partial_x q_0 & = 0 \\
    \tau \partial_t q_1 - (\partial_x q_2 - q_3) & = 0 \\
    \tau \partial_t q_2 - (\partial_x q_1 - q_2) & = 0 \\
    \tau \partial_t q_3 + (\partial_x q_0 - q_1) & = 0,
\end{align}
\end{subequations}
with $\tau >0$.  Solutions of \eqref{KS} are known to exhibit both periodic behavior
and chaotic behavior, depending on the problem setup.  A preliminary study of solutions
of \eqref{KSh} shows that it behaves similarly, yielding periodic or chaotic solutions
under similar circumstances.  The solutions of \eqref{KSh} are observed
to converge to those of \eqref{KS} as $\tau \to 0$, when the solution is not
chaotic.  


\begin{figure}
    \center
    \includegraphics[width=4in]{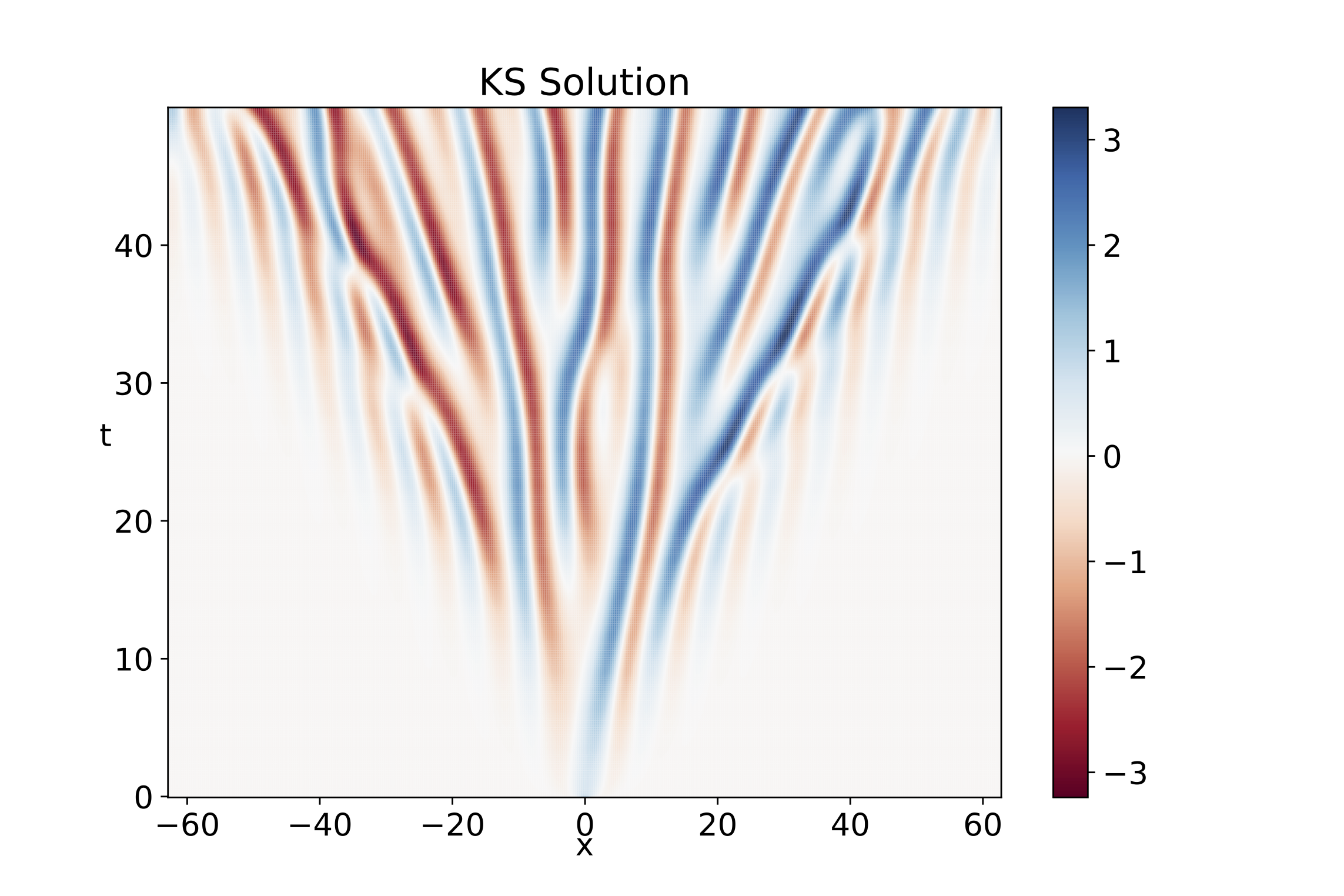}
    \caption{Solution of the KS equation \eqref{KS} up to $t=50$.}
    \label{fig:KS_Sol}
\end{figure}

\begin{figure}[htb]
    \begin{minipage}[b]{.48\textwidth}
    \includegraphics[width=\textwidth]{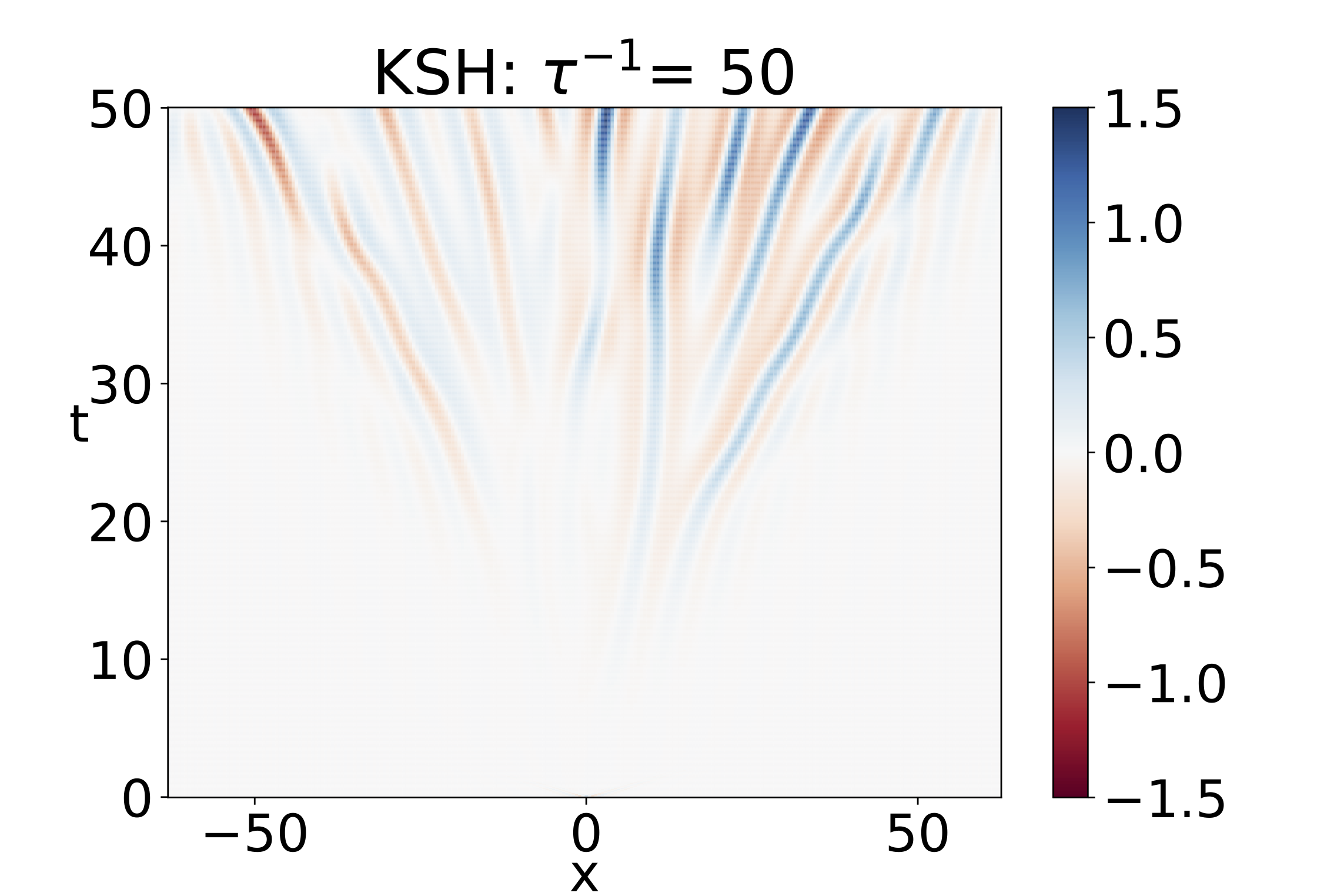}
    \end{minipage}
    \begin{minipage}[b]{.48\textwidth}
    \includegraphics[width=\textwidth]{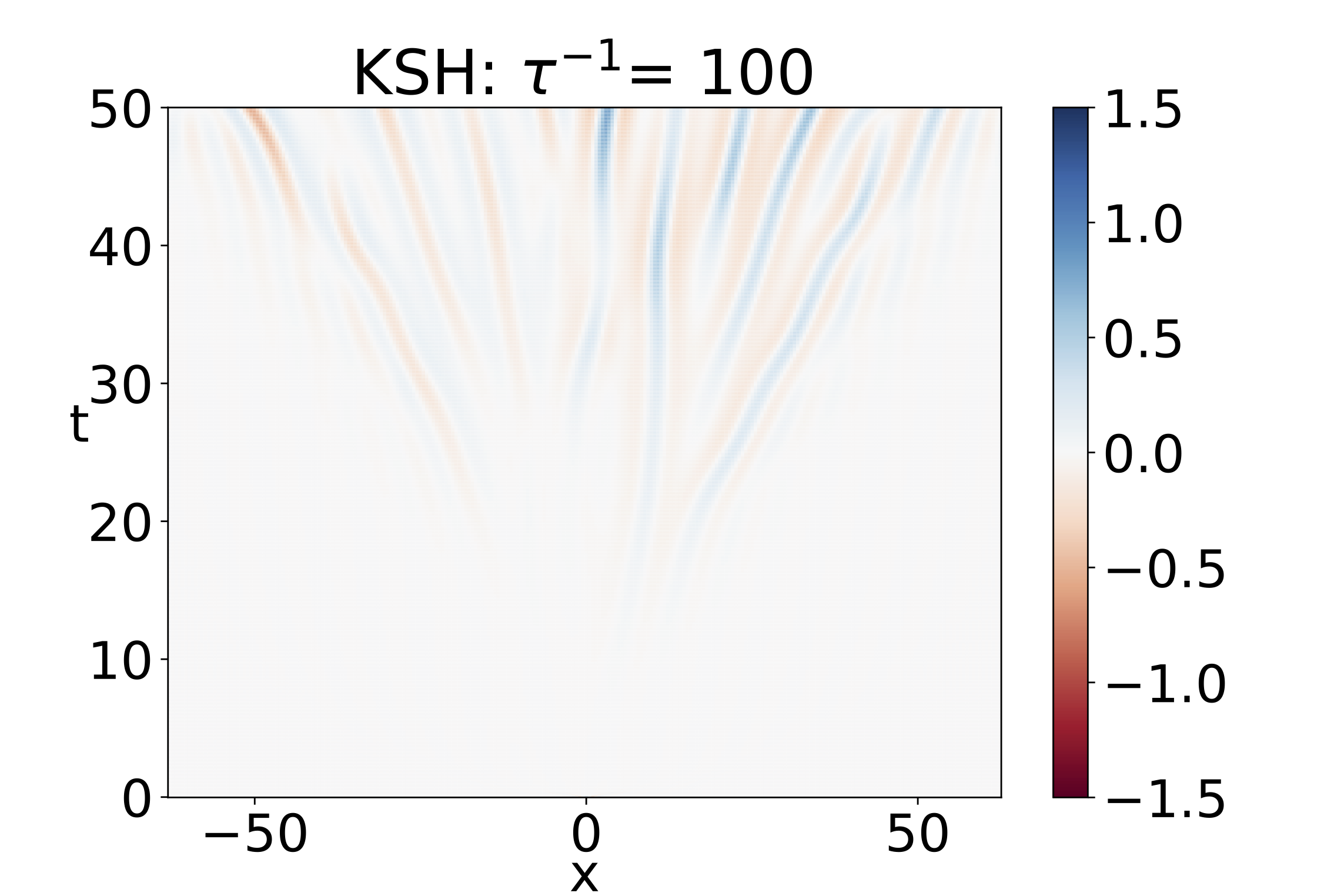}
    \end{minipage}
    \begin{minipage}[b]{.48\textwidth}
    \includegraphics[width=\textwidth]{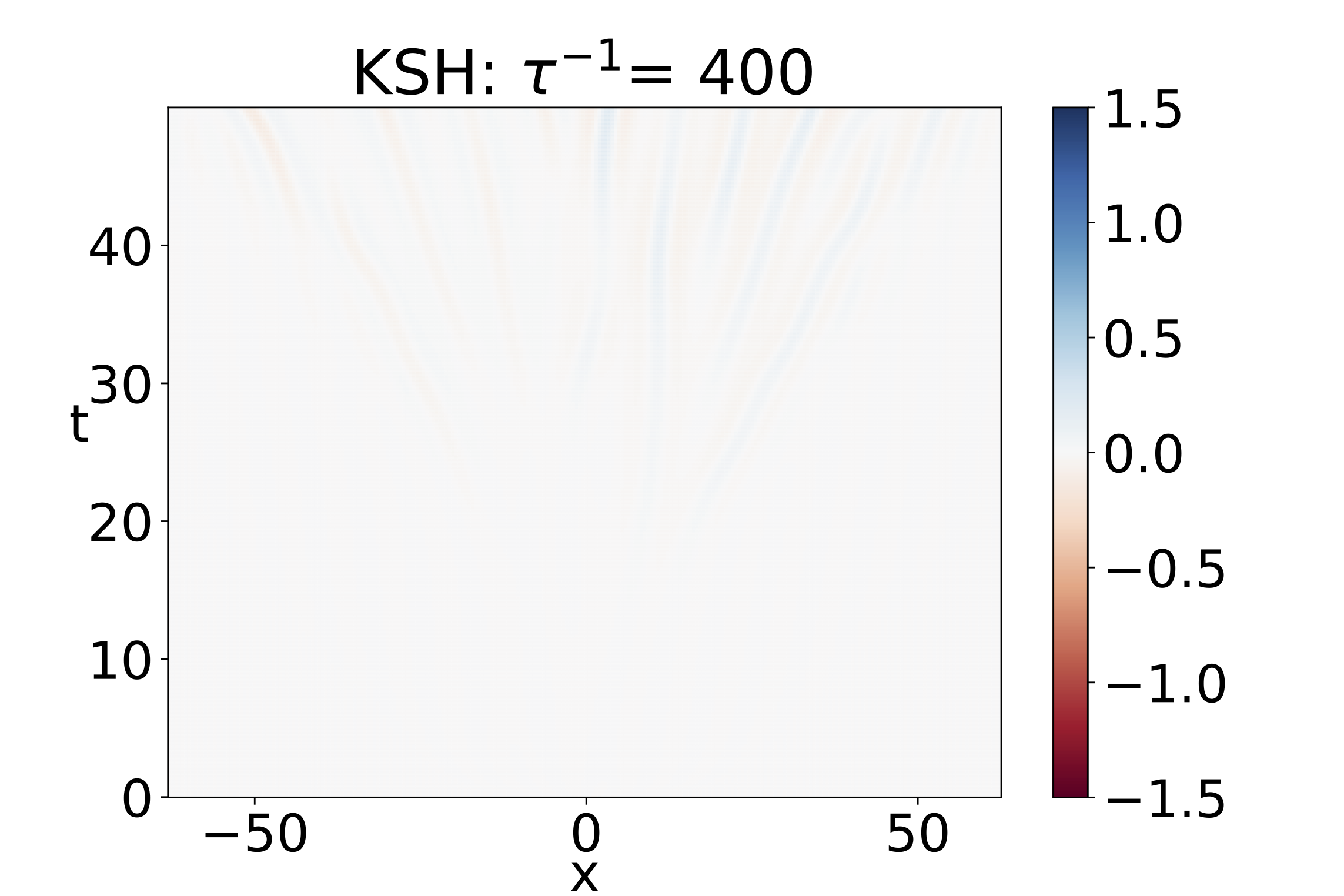}
    \end{minipage}
    \begin{minipage}[b]{.48\textwidth}
    \includegraphics[width=\textwidth]{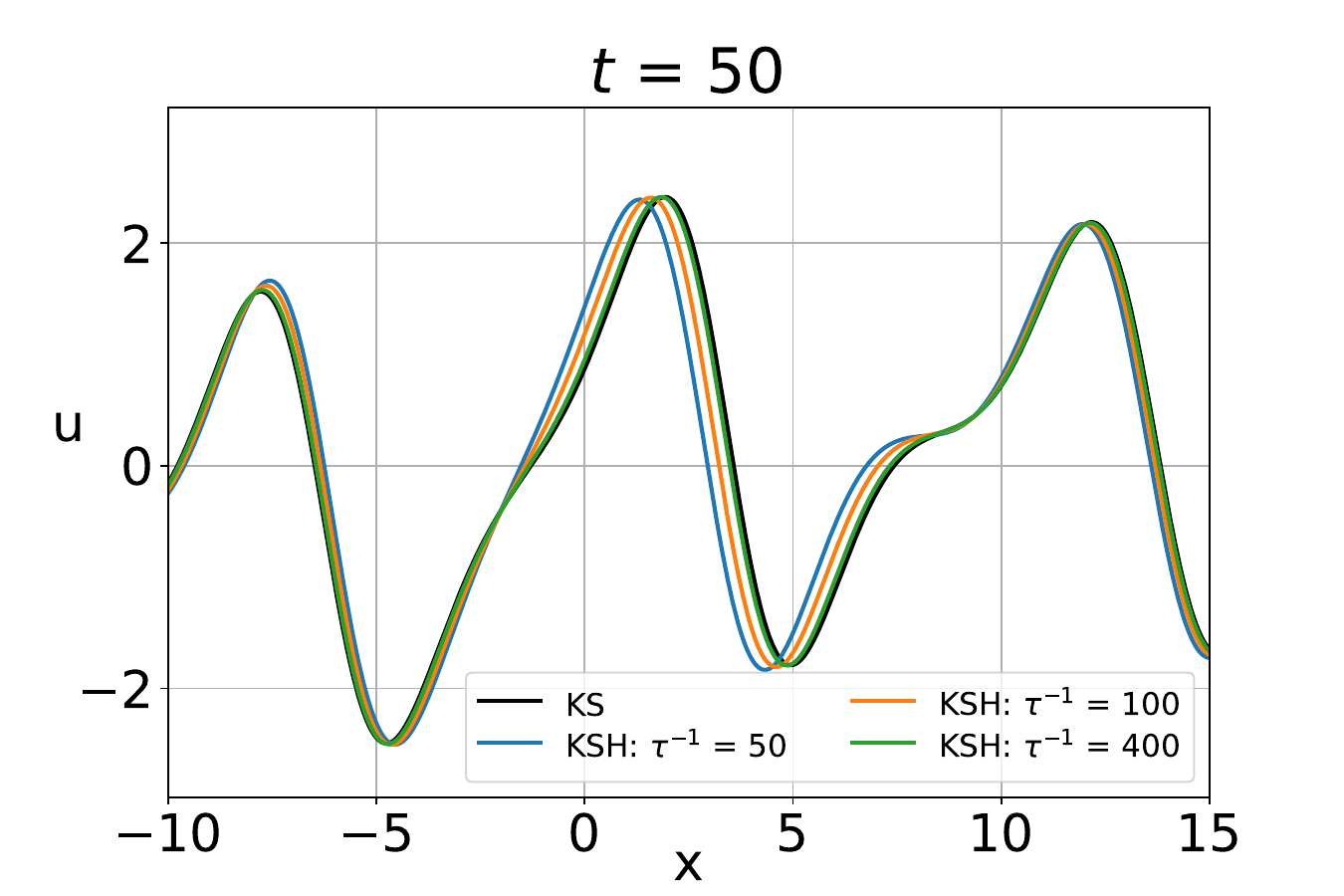}
    \end{minipage}
	\caption{Point-wise hyperbolization error of the solution of the hyperbolic approximation \eqref{KSh} to the KS equation \eqref{KS} 
    with a Gaussian initial condition for three different relaxation parameters. Top left panel: KSH with $\tau^{-1}=50$. Top right panel: 
    KSH with $\tau^{-1}=100$. Bottom left panel: KSH with $\tau^{-1}=400$. Bottom right panel: Comparison of solution of the KS equation
     and its hyperbolic approximation at $t=50$.}
	\label{fig:KS}
\end{figure} 
Here we present an example demonstrating the convergence of the solution of the hyperbolized Kuramoto-Sivashinsky (KSH) 
equation to the solution of the original Kuramoto-Sivashinsky (KS) equation as $\tau$ tends to 0. 
We consider the problem on the domain $[-20\pi, 20\pi]$ with the Gaussian initial condition $u(x,0) = e^{-x^2}$ and periodic boundary conditions. 
Figure \ref{fig:KS_Sol} shows 
the solution to the KS equation \eqref{KS} with a Gaussian initial condition, up to time $t=50$. Each KSH system with different 
relaxation parameters is solved using the pseudospectral method with $256$ grid points in space and the classical 4th-order RK 
method in time, up to a final time of $t=50$. The corresponding solution of the original KS equation at each time is also obtained 
using the pseudospectral method in space and a 4th-order ImEx method in time, with the same resolution in space and time. 
Figure~\ref{fig:KS} shows the point-wise errors of the solution of the KSH equation compared to the KS equation \eqref{KS}, 
and we observe that the solution of the KSH equation converges to the solution of the KS equation as $\tau$ tends to 0. The 
convergence of the solution of the KSH to the solution of the KS equation at $t=50$ is shown in the bottom right panel.

\section{Discussion} \label{sec:conclusion}
PDEs are perhaps the most widely used form of mathematical model, and
herein we have shown that a very wide range of PDEs can be approximately
transformed into a completely different class of PDE, potentially enabling their
solution and analysis by different techniques than what have so far
been applied to them.

While the first work on hyperbolization of high-order PDEs dates back to the 1950s,
development in this area has accelerated in the last 15 years, during which
a variety of techniques in this vein have been applied to a number of different models.
Some of these recent works can be seen as particular cases of the general technique discussed
herein.  Awareness and understanding of hyperbolization as a general tool is likely to
facilitate its application to an increasing number of models.

Herein we have shown for the first time that a stable hyperbolization exists for
PDEs of arbitrarily high order.  Our analysis has emphasize a particular family
of linear PDEs, but the resulting approach provides a clear and systematic way
to hyperbolize any scalar evolution equation (linear or nonlinear), and we have
found this approach to be effective for every equation to which we have applied it.
We have focused on scalar evolution equations,
in order to narrow the scope enough to facilitate development of a
comprehensive theory.  But a wide variety
of other PDE models -- including elliptic problems, evolution problems with hyperbolic
constraints, and general systems of high-order PDEs -- are susceptible to a similar treatment.

Even for scalar evolution equations, there are many interesting open questions
and avenues for further research in this area, including:
\begin{itemize}
    \item General design and analysis of energy conservation or other structural properties in hyperbolized systems;
    \item development of efficient numerical discretizations;
    \item more detailed understanding of the relative efficiency of hyperbolized approximations.
\end{itemize}
Furthermore, the construction of the hyperbolized equations admits additional freedom
that has been explored in the literature only in the context of specific models; for instance
the inclusion of additional convective terms in the auxiliary equations, or the use
of multiple different relaxation times $\tau_1, \tau_2, \dots$.

It is also worth considering the possibility of gaining insight into higher-order PDE models and
solutions by analyzing their hyperbolized counterparts.  An interesting corollary of our results
is that higher-order PDE models that violate local causality can be approximated arbitrarily
well by models that respect causality.  


\bibliographystyle{plain}
\bibliography{refs}

\end{document}